\DeclareMathAlphabet{\mathbbold}{U}{bbold}{m}{n}	
\theoremstyle{plain}
\newtheorem{theorem}{Theorem}
\newtheorem*{theorem*}{Theorem}
\newtheorem{prop}[theorem]{Proposition}
\newtheorem{conjecture}{Conjecture}
\newtheorem{lemma}[theorem]{Lemma}
\newtheorem*{lemma*}{Lemma}
\theoremstyle{definition}
\newtheorem{definition}[theorem]{Definition}
\theoremstyle{remark}
\newtheorem{example}[theorem]{Example}
\newtheorem{rmk}[theorem]{Remark}
\newcommand{\G}{\mathbb{G}}
\newcommand{\R}{\mathbb{R}}
\newcommand{\N}{\mathbb{N}}
\newcommand{\distr}{\mathcal{D}}
\newcommand{\cut}{\mathrm{cut}}
\newcommand{\Cut}{\mathrm{Cut}}
\newcommand{\Abn}{\mathrm{Abn}}
\renewcommand{\leq}{\leqslant} 
\renewcommand{\geq}{\geqslant}
\DeclareMathOperator{\tr}{\mathrm{Tr}}
\DeclareMathOperator{\codim}{\mathrm{codim}}
\DeclareMathOperator{\spn}{\mathrm{span}}
\DeclareMathOperator{\rank}{\mathrm{rank}}
\DeclareMathOperator{\diag}{\mathrm{diag}}
\DeclarePairedDelimiter{\floor}{\lfloor}{\rfloor}
\author[Luca Rizzi]{Luca Rizzi$^\sharp$}
\address{$^\sharp$ Univ. Grenoble Alpes, IF, F-38000 Grenoble, France \newline 
CNRS, IF, F-38000 Grenoble, France}
\email{\href{mailto:luca.rizzi@univ-grenoble-alpes.fr}{luca.rizzi@univ-grenoble-alpes.fr}}
\author[Ulysse Serres]{Ulysse Serres$^\flat$}
\address{$^\flat$ Univ. Lyon, Universit\'e Claude Bernard Lyon 1, CNRS, LAGEP UMR 5007, 43 bd du 11 novembre 1918, F-69100 Villeurbanne, France}
\email{\href{mailto:ulysse.serres@univ-lyon1.fr}{ulysse.serres@univ-lyon1.fr}}
\title{On the cut locus of free, step two Carnot groups}
\subjclass[2010]{53C17, 49J15}
\begin{document}

\begin{abstract}
In this note, we study the cut locus of the free, step two Carnot groups $\G_k$ with $k$ generators, equipped with their left-invariant Carnot-Carath\'eodory metric. 
In particular, we disprove the conjectures on the shape of the cut loci proposed in \cites{myasni1,myasni2} and \cite{MM}, by exhibiting sets of cut points $C_k \subset \G_k$ which, for $k \geq 4$, are strictly larger than conjectured ones. While the latter were, respectively, smooth semi-algebraic sets of codimension $\Theta(k^2)$ and semi-algebraic sets of codimension $\Theta(k)$, the sets $C_k$ are semi-algebraic and have codimension $2$, yielding the best possible lower bound valid for all $k$ on the size of the cut locus of $\G_k$.

Furthermore, we study the relation of the cut locus with the so-called abnormal set. In the low dimensional cases, it is known that
\[
\Abn_0(\G_k) = \overline{\Cut_0(\G_k)} \setminus \Cut_0(\G_k), \qquad k=2,3.
\]
For each $k \geq 4$, instead, we show that the cut locus always intersects the abnormal set, and there are plenty of abnormal geodesics with finite cut time.

Finally, and as a straightforward consequence of our results, we derive an explicit lower bound for the small time heat kernel asymptotics at the points of $C_k$.

The question whether $C_k$ coincides with the cut locus for $k\geq 4$ remains open.
\end{abstract}

\maketitle

\section{Introduction}

We recall some basic facts about sub-Riemannian manifolds and their geodesics (see \cites{nostrolibro,riffordbook} for details). Let $M$ be a smooth manifold and $\distr \subset TM$ be a smooth distribution with constant rank $k = \rank \distr$, satisfying the H\"ormander condition:
\begin{equation}\label{eq:Hormander}
\mathrm{Lie}(\Gamma(\distr))_q = T_q M, \qquad \forall q \in M,
\end{equation}
where the l.h.s. denotes the smallest Lie algebra generated by smooth sections of $\distr$. If $g$ is a smooth scalar product defined on $\distr$, then $(\distr,g)$ is a sub-Riemannian structure on $M$.

A Lipschitz path $\gamma :[0,T] \to $ is \emph{horizontal} if $\dot\gamma(t) \in \distr_{\gamma(t)}$ for a.e. $t \in [0,T]$. For any horizontal path $\gamma$, we define its length as 
\begin{equation}
L(\gamma): = \int_0^T \|\dot\gamma(t)\|_g \,dt.
\end{equation}
The length is invariant by Lipschitz reparametrization, so we can always reparametrize a horizontal curve in such a way that it has constant speed. Furthermore, we define the \emph{sub-Riemannian distance} $d : M \times M \to \R$ as $d(q,q') := \inf L(\gamma)$, where the infimum is taken over all horizontal curves that join $q$ with $q'$. Thanks to \eqref{eq:Hormander}, $d$ is finite and continuous, and so $(M,d)$ is a locally compact length metric space.

A \emph{geodesic} is a non-trivial horizontal curve $\gamma:[0,T] \to M$, with constant speed, that locally minimizes the length between its endpoints. It is \emph{maximal} if it is not the restriction of a geodesic defined on a larger interval $[0,T']$. The \emph{cut time} of a maximal geodesic is 
\begin{equation}
t_{\cut}(\gamma) := \sup\{t > 0 \mid \gamma|_{[0,t]} \text{ is a minimizing geodesic}\} > 0.
\end{equation}
Assuming that $(M,d)$ is complete, the \emph{cut locus} of $q \in M$ is the set of \emph{cut points} $\gamma(t_{\cut}(\gamma))$: 
\begin{equation}\label{eq:cutloc}
\Cut_q : = \{\gamma(t_{\cut}(\gamma)) \mid \gamma \text{ is a maximal geodesic starting at $q$}\}.
\end{equation}

The cut locus, together with the so-called abnormal geodesics, play an important role in the regularity properties of the sub-Riemannian distance \cites{cannarsarifford,RiffTrel,agrasmooth,AAA-Tangent,MM-semiconcavity} and of the heat kernel of sub-Laplacians \cites{BBN12,BBGN16,BBN-biheis}. Its properties are quite different with respect to the Riemannian setting, for example, $q$ is an accumulation point for $\mathrm{Cut}_q$, and $\mathrm{Cut}_q \cup \{q\}$ might be not closed. Moreover, while in Riemannian geometry $q' \in \mathrm{Cut}_q$ if and only if $q'$ is a critical value of the exponential map or there are two distinct minimizing geodesics joining $q$ with $q'$, this characterization is no longer true in the sub-Riemannian case due to the occurrence of abnormal geodesics.

The cut time is known explicitly for a handful of left-invariant structures on 3D Lie groups \cites{BoscRossi-Invariant,Sachkov-SE2,Sachkov-SH2}, on some Stiefel manifolds \cite{AM-Stiefel}, and for the following non-disjoint classes of Carnot groups: contact \cite{ABB12}, corank $1$ \cite{R16}, corank $2$ \cite{BBG12}, the Engel group \cite{Sachkov-Engel}, the bi-Heisenberg groups \cite{BBN-biheis} (that is, corank $1$ Carnot groups of dimension $5$), and $H$-type groups \cites{AM-Htypecut}. From the knowledge of the cut time, the cut locus can be computed via \eqref{eq:cutloc}. In a few symmetric cases, this yields an elegant and compact description for the cut locus.

\subsection{Summary of the results}

In this paper, we focus on the hierarchy $\G_k$ of \emph{free, step $2$ Carnot groups of rank $k \geq 2$}. Their first appearance traces back to the seminal works of Gaveau \cite{Gaveau77} and Brockett \cite{Brocket80}, and for this reason the corresponding minimization problem is called the \emph{Gaveau-Brockett problem} by Liu and Sussmann \cite{LiuSussman}. 

The groups $\G_k$ have the largest sets of symmetries among all Carnot groups of the same rank and step, and their geodesics can be computed quite explicitly. These features lead to expect the existence of an explicit formula describing their cut locus. This is indeed the case for $k=2$ (the Heisenberg group), where the closure of the cut locus is described succinctly as the zero set of an algebraic function. Surprisingly, such an algebraic description also exists for the free step $2$ Carnot group of rank $3$ -- the so-called $(3,6)$ group -- as proven independently and with different strategies in \cite{myasni1} and \cite{MM}.

Extrapolating from the known results in these cases, in \cites{myasni1,myasni2,MM}, the authors conjectured precise algebraic formulas for the cut loci of $\G_k$, for all $k \geq 2$. In this note, we disprove these conjectures. In particular, we exhibit an explicit hierarchy of sets $C_k \subset \G_k$ of cut points (see Definition~\ref{d:sigmastar}), which coincide with the corresponding cut loci for $k =2,3$, but are strictly larger than the conjectured ones for $k \geq 4$. While the previously conjectured cut loci were, respectively, smooth algebraic sets of codimension $\Theta(k^2)$ \cites{myasni1,myasni2} and algebraic sets of codimension $\Theta(k)$ \cite{MM}, the set $C_k$ is semi-algebraic of codimension $2$, for all $k$ (see Theorem~\ref{t:main}).

Furthermore, we study the relation between the cut locus and the so-called abnormal set. In the low dimensional cases, it is known that
\begin{equation}
\Abn_0(\G_k) = \overline{\mathrm{Cut}_0(\G_k)} \setminus \Cut_0(\G_k), \qquad k=2,3.
\end{equation}
Instead, for each $k \geq 4$, we show that the cut locus always intersects the abnormal set, and that there are plenty of abnormal geodesics with finite cut time (see Proposition~\ref{p:abnormal}).

Finally, and as a straightforward consequence of our results, we obtain an explicit lower bound for the small time heat kernel asymptotics at the points of $C_k$ (see Theorem~\ref{t:heatkernel}).

\section{Free step 2 Carnot groups of rank k}

We refer to \cites{Montgomerybook,Jeanbook}, for the definition of Carnot group. Here, we only deal with the specific free, step $2$ case. Let $\G_k := \R^k \oplus \wedge^2\R^k$. We identify $\wedge^2\R^k$ with the vector space of skew-symmetric real matrices, that is $v \wedge w = v w^* - w v^*$ for $v,w \in \R^k$. We denote points $(x,Y) \in \G_k$, where $x \in \R^k$ and $Y$ is a skew-symmetric matrix. The free, step $2$ Carnot group of rank $k$ is the sub-Riemannian structure on $\G_k$ generated by the set of global orthonormal vector fields:
\begin{equation}\label{eq:generators}
X_i := \partial_{x_i} -\frac{1}{2}\sum_{1\leq \ell<m\leq k}(e_i \wedge x)_{\ell m} \partial_{Y_{\ell m}}, \qquad i=1,\dots,k,
\end{equation}
where $\{e_1,\dots,e_n\}$ is the standard basis of $\R^k$. More precisely, the horizontal distribution is defined by $\distr := \spn\{X_1,\dots,X_k\}$ and the sub-Riemannian metric by $g(X_i,X_j) = \delta_{ij}$.

For all $i <j$, we have  $[X_i,X_j] = \partial_{Y_{ij}}$. In particular, the vector fields \eqref{eq:generators} generate the free, nilpotent Lie algebra of step $2$ with $k$ generators:
\begin{equation}
\mathfrak{g} = \mathfrak{g}_1 \oplus \mathfrak{g}_2, \qquad \text{where} \qquad \mathfrak{g}_1 = \spn\{X_1,\dots,X_k\}, \quad \mathfrak{g}_2 = \spn \{ \partial_{Y_{ij}}\}_{i<j}.
\end{equation}
There exists a unique Lie group structure on $\G_k$ such that the vector fields $X_i$ are left-invariant, given by the polynomial product law
\begin{equation}
(x,Y) \star (x',Y') = \left(x+x',Y + Y' + \frac{1}{2} x \wedge x'\right).
\end{equation}

Thus, $(\G_k,\star)$ is a connected, simply connected Lie group of dimension $k(k+1)/2$, such that its Lie algebra $\mathfrak{g}$ of left-invariant vector fields is isomorphic to the free, nilpotent, stratified Lie algebra of step $2$ with $k$ generators, and such that the first stratum $\mathfrak{g}_1$ is equipped with a left-invariant scalar product. Any Lie group $(\G'_k,\star')$ with the same properties is isomorphic to $(\G_k,\star)$ and they carry isometric sub-Riemannian structures.

Carnot groups are equipped with a one-parameter family of dilations. For $\G_k$, it is given by $\delta_\varepsilon(x,Y) := (\varepsilon x, \varepsilon^2 Y)$, for $\varepsilon>0$. As a consequence of this fact, the metric spaces $(\G_k,d)$ are complete, and there exists a minimizing geodesic joining any given pair of points.

\begin{example}
The case $k=2$ is the well-known Heisenberg group. Indeed, we can identify $(x,Y) \in \R^2 \oplus \wedge^2 \R^2$ with $(x,z) \in \R^2 \oplus \R$, so that the generating vector fields \eqref{eq:generators} read
\begin{equation}
X_1 = \partial_{x_1} - \frac{x_2}{2}  \partial_z, \qquad X_2 = \partial_{x_2} + \frac{x_1}{2}  \partial_z.
\end{equation}
\end{example}

\begin{example}
The case $k=3$ can be dealt with by identifying $(x,Y) \in \R^3 \oplus \wedge^2\R^3$ with $(x,t) \in \R^3 \oplus \R^3$. More precisely, any $3\times 3$ skew-symmetric matrix can be written as $Y = v \wedge w$ (in a non-unique way), and is identified with the cross product $t=v \times w$. Under this identification, the tautological action of $Y$ on $\R^3$ reads
\begin{equation}
Y x = (v \wedge w) x = x \times (v \times w) = x \times t, \qquad \forall x \in \R^3,
\end{equation}
and the generating vector fields \eqref{eq:generators} are
\begin{equation}
X_1 = \partial_{x_1} + \frac{x_3}{2} \partial_{t_2} - \frac{x_2}{2} \partial_{t_3}, \quad X_2 = \partial_{x_2} + \frac{x_1}{2} \partial_{t_3} -\frac{x_3}{2} \partial_{t_1}, \quad X_3 = \partial_{x_3} + \frac{x_2}{2} \partial_{t_1}- \frac{x_1}{2} \partial_{t_2}.
\end{equation}
\end{example}

\subsection{Geodesics}  In this case, a Lipschitz path $\gamma :[0,T] \to \G_k$ is horizontal if there is a control $u \in L^\infty([0,T],\R^k)$ such that, for almost every $t \in [0,T]$, we have
\begin{equation}\label{eq:horizontal}
\dot\gamma(t) = \sum_{i=1}^k u_i(t) X_i(\gamma(t)).
\end{equation}

The standard method to solve the length minimization problem is based on the Pontryagin maximum principle. In the step $2$ case, thanks to Goh condition, one can rule out all the so-called strictly abnormal curves. This analysis (see e.g.\ \cite[Appendix A]{LiuSussman}) yields that geodesics are all the horizontal curves $\gamma(t) = (x(t),Y(t))$ with control
\begin{equation}\label{eq:control}
u(t) = e^{-t \Omega} p, \qquad (p,\Omega) \in \R^k \oplus \wedge^2\R^k.
\end{equation}
In particular, all geodesics are real-analytic curves and can be extended on the maximal interval $[0,+\infty)$. The pair $(p,\Omega)$ is also referred to as the \emph{initial covector} in the Hamiltonian formalism.  Taking into account the explicit fields \eqref{eq:generators}, for any fixed $(p,\Omega)$, the actual geodesic can be recovered integrating
\begin{equation}\label{eq:geods}
\dot{x}(t)  =  u(t), \qquad \dot{Y}(t)  = \frac{1}{2} x(t) \wedge u(t). 
\end{equation}
\begin{rmk}\label{r:gauthier} We point out a useful observation from \cite[Prop. 3]{Gauthier-trick}, which follows by putting $\Omega$ in real normal form. For any control $u(t)$ as in $\eqref{eq:control}$, there is a unique $\tilde{\Omega} \in \wedge^2 \R^k$ with all simple non-zero eigenvalues and largest possible kernel such that $e^{-t\Omega}p = e^{-t \tilde\Omega}p$.
\end{rmk}
Most of the progress in the study of free Carnot groups of low dimension ($k=2,3$) is due to the fact that \eqref{eq:geods} can be explicitly integrated in terms of trigonometric functions. Geodesics for the case $k=2$, corresponding to the Heisenberg group, are well known. For the case $k=3$, explicit formulas appeared first in \cite{myasni1}. An explicit integration of the geodesic flow for the general case $\G_k$ appeared first in \cite[Thm. 4.1]{myasni2}. However, if the latter were correct, it would imply that the geodesic flow can only reach points $(x,Y)$ with $\rank( Y) = 2$. To our best knowledge, the only complete and correct integration of the geodesic flow for general $\G_k$ appears in \cite{MPAM-geodesics}, in terms of the spectral projectors of $\Omega$, together with explicit worked out examples for all $k\leq 5$.

\section{The conjectured cut loci}\label{s:cut}

By the left-invariance of the sub-Riemannian structure on $\G_k$, one can recover $\mathrm{Cut}_q(\G_k)$ for any $q \in \G_k$ by left translation of $\mathrm{Cut}_0(\G_k)$, where $0=(0,0)$ is the identity of $\G_k$. In the following, with the term ``cut locus'', we will refer to $\Cut_0(\G_k)$.

The cut locus of $\G_2$ (the Heisenberg group) is well known, and consists in the set of points $(0,Y) \in \R^2 \oplus \wedge^2\R^2$, with $Y \neq 0$. The computation of the cut time and cut locus for the case $k=3$ is much harder, and requires a careful manipulation of geodesic equations and symmetries. This has been done independently in \cite{myasni1} and \cite{MM}. There, it was proved that, for the step $2$ free Carnot groups of rank $k=2,3$, one has
\begin{equation}\label{eq:provedcut}
\Cut_0(\G_k) = \{ (x,Y) \mid Y = v\wedge w \neq 0,\, Yx = 0 \}, \qquad k=2,3.
\end{equation}
The characterization in \eqref{eq:provedcut} being dimension-free, the authors were naturally led to two closely related conjectures.
\begin{conjecture}[\cites{myasni1,myasni2}]\label{c:conj1}
The cut locus of step $2$, free Carnot group of rank $k$ is
\begin{equation}
P_k := \{ (x,Y) \mid Y = v\wedge w \neq 0,\, Yx = 0 \}.
\end{equation}
\end{conjecture}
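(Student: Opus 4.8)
The plan is to establish the equality $\Cut_0(\G_k)=P_k$ through the two inclusions $P_k\subseteq\Cut_0(\G_k)$ and $\Cut_0(\G_k)\subseteq P_k$, exploiting the large symmetry group of the structure to reduce every geodesic to a normal form. By left-invariance it suffices to work at the origin, and the orthogonal group $O(k)$ acts by isometries fixing $0$ (as $x\mapsto Rx$, $Y\mapsto RYR^*$), while the dilations $\delta_\varepsilon$ rescale lengths. Combining these with Remark \ref{r:gauthier}, I would assume that the initial covector $(p,\Omega)$ has $\Omega$ in real normal form, block-diagonal with $2\times 2$ rotation blocks of distinct frequencies $\alpha_1>\dots>\alpha_m>0$ and a complementary kernel, and then integrate \eqref{eq:geods} explicitly: on each eigenplane the horizontal projection $x(t)$ runs over a closed curve of period $2\pi/\alpha_j$, on the kernel it is the linear drift, and $Y(t)$ accumulates the signed areas swept in the eigenplanes together with cross-terms coupling distinct planes and the drift.

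For the inclusion $P_k\subseteq\Cut_0(\G_k)$ I would argue by exhibiting a continuum of minimizers. After an $O(k)$ rotation, a point $(x,Y)\in P_k$ has $Y=\alpha\,e_1\wedge e_2\neq0$ and $x\in\ker Y=\spn\{e_3,\dots,e_k\}$, hence is fixed by the whole circle $SO(2)$ of rotations in the $e_1e_2$-plane (these fix $x$ and conjugate $Y$ to itself). By completeness there is at least one minimizing geodesic $\gamma$ joining $0$ to it. An $SO(2)$-invariant geodesic would be confined to the fixed-point set of the rotations, which forces the $e_1\wedge e_2$-component of $Y$ to vanish, so $\gamma$ cannot be invariant; its $SO(2)$-orbit is thus a nontrivial circle of distinct minimizers ending at $(x,Y)$. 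The existence of infinitely many distinct minimizers shows that none of them can be extended as a minimizer, whence $(x,Y)=\gamma(t_{\cut}(\gamma))\in\Cut_0(\G_k)$.

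The reverse inclusion $\Cut_0(\G_k)\subseteq P_k$ is the crux, and would require computing, for every initial covector, the cut time $t_{\cut}$ and checking that $\gamma(t_{\cut})$ satisfies $\rank Y=2$ and $Yx=0$. The cut time is the minimum of the first conjugate time and the first Maxwell time; the continuous stabilizers $SO(2)$ in each eigenplane, together with the residual discrete symmetries, produce the Maxwell strata, and one expects the Maxwell time to be governed by the largest frequency, $t_{\cut}=2\pi/\alpha_1$ in the driftless case. I would then evaluate the endpoint at $t_{\cut}$ and try to show that its $Y$-component has rank $2$ and annihilates $x$, the latter being delicate already for $k=3$ because the drift shifts the return time.

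This last step is where I expect the argument to break down, and indeed where the statement itself must fail for $k\ge4$. For $k=2,3$ the matrix $\Omega$ has rank at most $2$, so there is a single frequency $\alpha_1$; then $Y$ is forced to have rank $\le2$ (automatic for $k=3$, since every $3\times3$ skew-symmetric matrix has rank $\le2$), and the one-frequency geometry pins $x$ to the axis of $Y$, yielding $Yx=0$ once the cut time is correctly determined. For $k\ge4$, however, $\Omega$ may carry two or more distinct frequencies, and at the cut time the geodesic has swept nonzero area in several independent $2$-planes, so the resulting $Y$ generically has rank $>2$; moreover the coupling between the rotating components and the drift makes $Yx\neq0$. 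Thus the honest outcome of carrying out the multi-frequency cut-time computation is not a proof of $P_k$ but a family of cut points lying outside $P_k$ — precisely the enlargement the paper constructs. The main obstacle, namely controlling $\gamma(t_{\cut})$ for multi-frequency covectors, is therefore not a removable technical gap but the reason the conjectured description is false for $k\ge4$.
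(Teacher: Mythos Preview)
The statement you were asked to prove is a \emph{conjecture} that the paper \emph{disproves} for $k\ge 4$; there is no proof of it in the paper to compare against. You correctly recognize this in your final two paragraphs, identifying the multi-frequency obstruction as the reason the equality $\Cut_0(\G_k)=P_k$ fails once $k\ge 4$. In that sense your conclusion agrees with the paper's.

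Your argument for the inclusion $P_k\subseteq\Cut_0(\G_k)$ is essentially the paper's Proposition~\ref{p:family} specialized to the rank-$2$ case: the $\mathrm{SO}(2)$ stabilizer of $(x,Y)$ acts nontrivially on any minimizing geodesic, producing a circle of distinct minimizers. One gap worth noting: the passage from ``infinitely many distinct minimizers to $(x,Y)$'' to ``$(x,Y)$ is a cut point'' is not automatic in sub-Riemannian geometry, since abnormal branching could in principle allow a minimizer to extend past a Maxwell point. The paper closes this with Lemma~\ref{l:maxincut}, using that all minimizers in $\G_k$ are real-analytic.

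Where your proposal and the paper diverge is in the disproof itself. You argue heuristically that for $k\ge 4$ a multi-frequency covector should produce a cut point with $\rank Y>2$ or $Yx\neq 0$, but you neither exhibit such a point nor verify that it is actually a cut point; your sketch would require computing cut times along multi-frequency geodesics. The paper avoids this entirely: it defines the set $C_k$ of points $(x,Y)$ admitting a nontrivial orthogonal stabilizer acting as the identity on $\ker Y$ (Definition~\ref{d:sigmastar}), shows via the same stabilizer mechanism that every point of $C_k$ is reached by a continuum of minimizers (Proposition~\ref{p:family}), and reads off directly from normal forms that $P_k\subsetneq\Sigma_k\subsetneq C_k$ for $k\ge 4$. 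No cut time is ever computed and the integrated geodesic equations are never used; the disproof is a pure symmetry argument, simpler than the Maxwell-time analysis you outline and yielding in addition the codimension-$2$ lower bound on $\Cut_0(\G_k)$.
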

\begin{conjecture}[\cite{MM}]\label{c:conj2}
The cut locus of step $2$, free Carnot group of rank $k$ is
\begin{equation}
\Sigma_k := \{ (x,Y) \mid Y \neq 0,\, Yx = 0 \}.
\end{equation}
\end{conjecture}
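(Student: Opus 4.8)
Since the statement is a conjectured equality that, by the abstract, fails for $k\ge 4$, I read the task as \emph{disproving} it, and the plan is to produce explicit cut points lying outside $\Sigma_k$. Because $P_k\subseteq\Sigma_k$, refuting Conjecture~\ref{c:conj2} also refutes Conjecture~\ref{c:conj1}. The guiding structural remark is that a nonzero skew-symmetric matrix of size $\le 3$ automatically has rank exactly $2$, so $P_k=\Sigma_k$ for $k\le 3$ and both coincide with the true cut locus only because no other rank is available; the new phenomenon must therefore exploit $\rank Y\ge 4$, which requires $k\ge 4$. Concretely, I want a maximal geodesic $\gamma$ whose cut point $q=\gamma(t_{\cut}(\gamma))=(x,Y)$ satisfies $Yx\ne 0$.

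\emph{Construction.} By Remark~\ref{r:gauthier} I take the initial covector $(p,\Omega)$ with $\Omega$ in normal form acting by rotations $e^{\theta J}$ (with $J$ the standard generator of $SO(2)$, $J^2=-I$) of distinct frequencies $\alpha_1>\alpha_2>0$ on two orthogonal $2$-planes $V_1,V_2\subset\R^k$, trivially elsewhere, with nonzero momenta $p_1,p_2$ in each plane. Integrating \eqref{eq:geods} explicitly, as in \cite{MPAM-geodesics}, gives $x(t)$ and $Y(t)$ as trigonometric functions of $\alpha_1,\alpha_2,t$. I evaluate at $t_*:=2\pi/\alpha_1$, the period of the fast block: there $e^{-t_*\alpha_1 J}=I$, so the fast projection closes, $x^{(1)}(t_*)=0$, whereas $x^{(2)}(t_*)\ne 0$ because $\alpha_2/\alpha_1\notin\mathbb{Z}$. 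Writing $Y$ in $2\times 2$ blocks $Y_{ab}$ along $V_1,V_2$, the fast self-block $Y_{11}(t_*)=c_1 J$ is nonzero (a full loop encloses nonzero area) and the slow self-block is $Y_{22}(t_*)=c_2 J$ with $c_2\ne 0$. Since $x(t_*)=x^{(2)}(t_*)\in V_2$, the $V_2$-component of $Y(t_*)x(t_*)$ equals $c_2 J x^{(2)}(t_*)\ne 0$ and cannot be cancelled by the $V_1$-component $Y_{12}(t_*)x^{(2)}(t_*)$; hence $Yx\ne 0$ and $q=(x(t_*),Y(t_*))\notin\Sigma_k$, with $\rank Y(t_*)=4$.

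To conclude that $q$ is genuinely a cut point I must show $t_{\cut}(\gamma)=t_*$, namely (i) $\gamma$ is minimizing on $[0,t_*]$, and (ii) $\gamma$ loses optimality at $t_*$. For (ii) I would exhibit a second minimizer reaching $q$ with the same length, a Maxwell point, using the isometric $O(k)$-action $(p,\Omega)\mapsto(Ap,A\Omega A^{*})$, $(x,Y)\mapsto(Ax,AYA^{*})$: a rotation $A$ supported on the fast plane $V_1$ fixes $x(t_*)$, $Y_{11}(t_*)$ and $Y_{22}(t_*)$, so once the configuration is tuned to make the cross-block $Y_{12}(t_*)$ vanish it fixes $q$ while moving $(p,\Omega)$, producing a whole circle of distinct geodesics of equal length to $q$; alternatively, a direct conjugate-point computation should show the differential of the exponential map degenerate at $t_*$. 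Either route gives $t_{\cut}(\gamma)\le t_*$.

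The genuine obstacle is (i): proving global minimality of $\gamma$ up to $t_*$, and in particular ruling out any earlier Maxwell or conjugate point created by the interaction of the two blocks. I would attack this by reducing, via the block structure and the symmetries, to the optimal synthesis of the dominating fast Heisenberg-type factor, whose cut time is exactly its period $2\pi/\alpha_1$, and by a calibration or comparison argument controlling the slow block's contribution. I expect this minimality step, rather than the endpoint computation, to carry the whole weight of the argument.
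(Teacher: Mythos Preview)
Your instinct to use the $\mathrm{O}(k)$-symmetry to produce a second geodesic is exactly right, and it is precisely what the paper does. But you are applying it in the wrong direction, and this creates the gap you yourself identify at step~(i).

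You fix an initial covector $(p,\Omega)$, integrate forward to time $t_*=2\pi/\alpha_1$, and then try to prove that the resulting geodesic is globally minimizing on $[0,t_*]$. As you say, this ``carries the whole weight of the argument'' --- and you do not do it. Proving minimality of a specific geodesic in $\G_k$ for $k\ge 4$ by direct analysis of the integrated flow is genuinely hard; it is essentially the open problem the paper is written around. Your reduction to the fast Heisenberg factor is not a proof: the interaction between the two blocks can, a priori, create an earlier conjugate or Maxwell time, and no calibration is offered. In addition, for your Maxwell argument at $t_*$ to go through you need the cross-block $Y_{12}(t_*)$ to vanish, which is not automatic for block-diagonal $\Omega$; you gesture at ``tuning the configuration'' but do not carry it out.

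The paper sidesteps all of this by reversing the logic. It first \emph{defines} the target set $C_k$ of points $(x,Y)$ admitting a nontrivial stabilizer $M\in\mathrm{O}(k)$ with $M|_{\ker Y}=\mathbbold{1}$; the normal-form discussion shows that $(x,Y)\in C_k\setminus\Sigma_k$ whenever, say, $Y$ is in normal form with simple nonzero eigenvalues, $x_1=0$ on one eigenplane, and $x_2\neq 0$ on another. Then, by completeness of $(\G_k,d)$, \emph{some} minimizing geodesic $\gamma$ joins $0$ to $(x,Y)$ --- no formula, no cut-time computation. Applying $\rho_M$ to that abstract minimizer yields a second minimizer of the same length (Proposition~\ref{p:family} checks they are genuinely distinct, using that $\ker Y$ is fixed by $M$). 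Finally, real-analyticity of geodesics in step~$2$ Carnot groups (Lemma~\ref{l:maxincut}) forbids branching, so two distinct minimizers force $(x,Y)\in\Cut_0(\G_k)$. Minimality is never verified for any explicit curve; it is obtained for free from completeness. This is the idea you are missing, and it dissolves step~(i) entirely.
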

Clearly, $\Sigma_k = P_k$ for $k=2,3$, but $P_k \subsetneq \Sigma_k$ for $k \geq 4$. These conjectures imply analogous claims on the size of the cut loci, in particular $\codim(P_k) = \Theta(k^2)$ and $\codim(\Sigma_k) = \Theta(k)$, for large $k$ (see the forthcoming Proposition~\ref{p:dimsigma}).

\subsection{Semi-algebraic sets}
A set $A \subset \R^n$ is \emph{semi-algebraic} if it is the result of a finite number of unions and intersections of sets of the form $\{f = 0\}$, $\{g > 0\}$, where $f$, $g$ are polynomials on $\R^n$. We recall some of their basic properties, referring to \cite{realalgebraicgeometry} for details. If $A$ is smooth, its dimension as a semi-algebraic set \cite[Def. 2.8.1]{realalgebraicgeometry} is equal to its dimension as a smooth manifold. Moreover, if $A$ is the finite union of semi-algebraic sets $A_1,\dots,A_p$, then $\dim(A) = \max_i \dim(A_i)$. Finally, if $A$ is a semi-algebraic set, then its closure, its interior and any Cartesian projection of $A$ are semi-algebraic sets.

\begin{prop}\label{p:dimsigma}
$P_k \subseteq \G_k$ is a smooth semi-algebraic set of codimension $(k^2-5k+10)/2$. $\Sigma_k \subseteq \G_k$ is a semi-algebraic set of codimension $2\floor{k/2}$.
\end{prop}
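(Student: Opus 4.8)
The plan is to handle the two sets separately, in each case reducing the dimension count to the rank stratification of the space $\wedge^2\R^k$ of skew-symmetric matrices and then reading off the codimension in $\G_k$, using $\dim\G_k = k + \binom{k}{2}$ together with the standard semi-algebraic facts recalled above (finite unions have dimension the maximum of the pieces, and projections of semi-algebraic sets are semi-algebraic).

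For $P_k$, I would first note that $Y = v\wedge w \neq 0$ forces $\rank Y = 2$, so the admissible matrices form the set $R_2 \subset \wedge^2\R^k$ of nonzero decomposable two-vectors. This is the affine cone over the Grassmannian $\mathrm{Gr}(2,k)$ under the Pl\"ucker embedding, minus its vertex; away from $0$ it is a smooth semi-algebraic manifold (cut out by the Pl\"ucker relations) of dimension $2(k-2)+1 = 2k-3$. On $R_2$ the rank is constant, so $\ker Y$ has constant dimension $k-2$ and the kernels assemble into a smooth rank-$(k-2)$ vector bundle over $R_2$; the condition $Yx = 0$ identifies $P_k$ with the total space of this bundle. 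Hence $P_k$ is a smooth semi-algebraic manifold of dimension $(2k-3)+(k-2) = 3k-5$, whose semi-algebraic dimension equals its manifold dimension, and a direct computation gives $\codim P_k = k + \binom{k}{2} - (3k-5) = \tfrac{k^2-5k+10}{2}$.

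For $\Sigma_k$, semi-algebraicity is immediate, since it is defined by the polynomial equations $Yx = 0$ together with the open condition $Y \neq 0$. To compute its dimension I would stratify by the (necessarily even) rank of $Y$, writing $\Sigma_k = \bigsqcup_{r=1}^{\floor{k/2}} \Sigma_k^{(r)}$ with $\Sigma_k^{(r)} := \{(x,Y) : \rank Y = 2r,\ Yx = 0\}$. For skew-symmetric $Y$ one has $\ker Y = (\mathrm{im}\,Y)^{\perp}$, so the locus $R_{2r}$ of matrices of rank exactly $2r$ fibers over $\mathrm{Gr}(2r,k)$ with fiber the nondegenerate skew forms on the image, giving $\dim R_{2r} = 2r(k-2r) + \binom{2r}{2} = 2rk - 2r^2 - r$. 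Over $R_{2r}$ the kernels again form a smooth rank-$(k-2r)$ bundle and $\Sigma_k^{(r)}$ is its total space, so that
\[
\dim \Sigma_k^{(r)} = \dim R_{2r} + (k-2r) = 2rk - 2r^2 - 3r + k .
\]
Consequently $\dim \Sigma_k = \max_{1\le r\le \floor{k/2}} (2rk - 2r^2 - 3r + k)$.

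The main obstacle is precisely this maximization, because the two contributions pull in opposite directions: enlarging the rank grows the base $R_{2r}$ but shrinks the kernel fiber, while lowering it does the reverse. I would isolate the generic stratum $r = \floor{k/2}$, where $Y$ has maximal rank $2\floor{k/2}$, the base $R_{2\floor{k/2}}$ is the open dense set of maximal-rank matrices of dimension $\binom{k}{2}$, and the kernel has dimension $k - 2\floor{k/2}$; this yields $\dim \Sigma_k^{(\floor{k/2})} = \binom{k}{2} + (k - 2\floor{k/2})$ and hence codimension $2\floor{k/2}$. The delicate core of the argument is then to confirm that this generic stratum dominates all the lower-rank strata, i.e.\ that the gain in kernel dimension obtained by dropping the rank never outweighs the simultaneous loss in base dimension; establishing this comparison pins $\codim\Sigma_k$ to the asserted value $2\floor{k/2}$.
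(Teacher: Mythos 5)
Your treatment of $P_k$ is correct, and your stratum counts for $\Sigma_k$ agree exactly with the paper's. The paper obtains each stratum $\Sigma_k^{2r}$ as $\phi_{2r}^{-1}(0)$ for the constant-rank map $\phi_{2r}(x,Y)=Yx$ on $\R^k\times S_{2r}$, yielding $\codim \Sigma_k^{2r} = (k-2r)(k-2r-1)/2+2r$; your fibration of the fixed-rank locus over $\mathrm{Gr}(2r,k)$ with the kernel bundle on top is a cosmetically different route to the same number, since your $h(r):=2rk-2r^2-3r+k$ satisfies $k+\binom{k}{2}-h(r) = (k-2r)(k-2r-1)/2+2r$. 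Up to this point everything is sound, including the identification $P_k=\Sigma_k^{2}$ and the value $\codim P_k=(k^2-5k+10)/2$.

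The gap is precisely the step you flag as the ``delicate core'' and then leave unestablished --- and in fact it cannot be established, because the generic stratum does \emph{not} dominate when $k$ is even. The function $h(r)=-2r^2+(2k-3)r+k$ is concave with real maximizer $r^{\ast}=k/2-3/4$, so over the admissible integers $1\le r\le \floor{k/2}$ the maximum is attained at $r=(k-1)/2$ for odd $k$ (codimension $k-1=2\floor{k/2}$, as asserted), but at $r=k/2-1$ for even $k\ge 4$: one computes $h(k/2-1)=\binom{k}{2}+1=h(k/2)+1$, so the corank-two stratum is strictly larger than the maximal-rank one, and the stratification gives $\codim\Sigma_k=k-1<k=2\floor{k/2}$. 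The case $k=4$ makes this concrete: $\dim\Sigma_4^{2}=7>6=\dim\Sigma_4^{4}$, and indeed the inclusion $P_4\subseteq\Sigma_4$ together with the first half of the statement ($\codim P_4=(16-20+10)/2=3$) already forces $\codim\Sigma_4\le 3$. So your plan, carried out to the end, yields $\codim\Sigma_k=k-1$ for all $k\ge 3$, which matches the asserted value $2\floor{k/2}$ only for odd $k$ and for $k\le 3$. It is worth noting that the paper's own proof also stops at the per-stratum codimension formula and never performs this minimization; you have therefore isolated exactly the point where the argument is incomplete, but the conclusion you assert there is the wrong one for even $k\ge 4$, so as written your proof both has a hole and, once the hole is filled, contradicts the claimed codimension in the even case.
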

\begin{proof}
It is clear that both sets are semi-algebraic. Notice also that
\begin{equation}
\Sigma_k = \bigsqcup_{r=1}^{\floor{k/2}} \Sigma_k^{2r}, \qquad \Sigma_k^{2r} = \{(x,Y)\mid Yx =0, \, \rank(Y) = 2r\}.
\end{equation}
Indeed $P_k = \Sigma_k^2$. We now prove that each $\Sigma_k^{2r}$ is a smooth manifold and we compute its codimension. The set $S_{2r} \subset \wedge^2 \R^k$ of rank $2r$ skew-symmetric matrices is a smooth submanifold of codimension $\dim(\mathfrak{so}(k-2r))=(k-2r)(k-2r-1)/2$. Let $\phi_{2r}: \R^k \times S_{2r} \to \R^k$ be the smooth map $\phi_{2r}(x,Y) = Yx$. Then, $\Sigma_k^{2r} = \phi^{-1}_{2r}(0)$. 

We claim that $\phi_{2r}$ has rank $2r$. This claim concludes the proof since $\phi^{-1}_{2r}(0)$ is then a codimension $2r$ submanifold of a codimension $(k-2r)(k-2r-1)/2$ submanifold of $\G_k$, that is a submanifold of codimension $(k-2r)(k-2r -1)/2+2r = (k^2- (4 r+1)k +2 r (2 r+3))/2$. To prove the claim, we drop the subscript from the notation $\phi_{2r}$, and we assume, without loss of generality, that $(x,Y) \in \Sigma_k^{2r}$ is of the form
\begin{equation}\label{eq:15}
Y = \begin{pmatrix}
\bar{Y} & \\
& \mathbbold{0}_{k-2r}
\end{pmatrix}, \qquad x =\begin{pmatrix} 0 \\ x_0 \end{pmatrix}, \qquad \rank (\bar{Y}) = 2r,\quad x_0 \in \R^{k-2r}.
\end{equation}
The smooth variations $\gamma_i(\varepsilon)=(x+\varepsilon e_i,Y) \in \R^k \times S_{2r}$, for $i=1,\dots,2r$, are such that
\begin{equation}
\partial_t \phi(\gamma_i)(0) = \bar{Y} e_i.
\end{equation}
Thus, the image of the differential of $\phi$ at $(x,Y)$ contains $\spn\{e_1,\dots,e_{2r}\}$, and $\rank(\phi) \geq 2r$. Suppose that $\rank(\phi) > 2r$. Therefore, there exists $\gamma(t) = (x+t x',Y+t Y') \in \R^k \times S_{2r}$, such that $\partial_t \phi(\gamma(0))$ and $\spn\{e_1,\dots,e_{2r}\}$ are independent. Splitting $Y'$ as in \eqref{eq:15},
\begin{equation}
Y' = \begin{pmatrix}
Y'_1 & Y'_2 \\
Y'_3 & Y'_4
\end{pmatrix},\qquad  \text{we obtain} \qquad \partial_t \phi(\gamma(0)) = \begin{pmatrix} * \\ Y'_4 x_0 \end{pmatrix},
\end{equation}
where the latter is independent of $\spn\{e_1,\dots,e_{2r}\}$. But this means that, for sufficiently small $t$, we have $\rank (Y+t Y') >2r$, which contradicts the fact that $\gamma \in \R^k \times S_{2r}$.
\end{proof}

\section{A larger set of cut points}\label{s:larger}
The orthogonal group $\mathrm{O}(k)$ acts smoothly on $\G_k$. In particular, any $M \in \mathrm{O}(k)$ induces an isometry of Carnot groups $\rho_M : \G_k \to \G_k$, given by $\rho_M(x,Y) = (Mx, MY M^*)$.

\begin{definition}\label{d:sigmastar}
We define $C_k$ as the set of points $(x,Y) \in \R^k \oplus \wedge^2\R^k$, with $Y \neq 0$, such that there exists a non-trivial $M \in \mathrm{O}(k)$ stabilizing $(x,Y)$, i.e.\
\begin{equation}
Mx = x, \qquad MYM^* = Y,
\end{equation}
and such that $M|_{\ker Y} = \mathbbold{1}$.
\end{definition}
For the Heisenberg group, $C_2 = \Sigma_2 = P_2 = \{(0,Y)\mid Y  \neq 0 \}$. For rank $k=3$, we can write $Y= v \wedge w$ for some (non-unique) pair $v,w \in \R^k$. In this case, any $M \in \mathrm{O}(3)$ such that $MYM^* = Y$ is an orthogonal transformation in the plane $\spn\{v,w\} = (\ker Y)^\perp$. By the last condition, such a rotation is non-trivial. Thus, the condition $Mx = x$ implies that $x \perp \spn\{v,w\}$, and $C_3 = \Sigma_3 = P_3$. As it will be evident from the forthcoming discussion about normal forms, this is no longer true for $k\geq 4$. We now clarify the shape of $C_k$.

\subsection{Normal forms}\label{s:normalform}
Let $(x,Y) \in \G_k$. Let $0 <\alpha_1 < \dots < \alpha_\ell$  be the absolute values of the non-zero distinct eigenvalues of $Y$, each having multiplicity $m_i$, and $m_0 = \dim (\ker Y)$. Up to an isometry $\rho_O$, we can assume that $Y$ has the real normal form
\begin{equation}
Y = \begin{pmatrix}
\alpha_1 \mathbbold{J}_{2m_1} & & &  \\
& \ddots & &  \\
& & \alpha_\ell \mathbbold{J}_{2m_\ell} & \\
& & & \mathbbold{0}_{m_0}
\end{pmatrix}, \qquad \mathbbold{J}_{2m} = \begin{pmatrix}
0 & \mathbbold{1}_{m} \\
-\mathbbold{1}_{m} & 0
\end{pmatrix}.
\end{equation}
Accordingly, we write $x = (x_1, \dots, x_\ell, x_0)$, with $x_i \in \R^{2m_i}$ for $i=1,\dots,\ell$, and $x_0 \in \R^{m_0}$. 

If we insist that $(x,Y) \in C_k$, the conditions $M|_{\ker Y} = \mathbbold{1}$ and $M YM^* = Y$ imply that
\begin{equation}\label{eq:eccola}
M = \begin{pmatrix}
M_1 & & & \\
& \ddots & & \\
& & M_\ell & \\
& & & \mathbbold{1}_{m_0}
\end{pmatrix},  \qquad M_i \in \mathrm{O}(2m_i) \cap \mathrm{Sp}(2m_i) \simeq \mathrm{U}(m_i).
\end{equation}
Thus, the conditions $Mx = x$, and $M \neq \mathbbold{1}$ yield that $(x,Y) \in C_k$ if and only if, for some $i \in \{1,\dots,\ell\}$, either $x_{i} =0$ or $m_{i} > 1$. In the first case, there is no further restriction on $M_i$, while in the second we must have $M_i x_i = x_i$. Thus, if $(x,Y) \in C_k$, the set
\begin{equation}
\mathscr{M}(x,Y) := \{ M \in \mathrm{O}(k)  \text{ satisfying the property of Definition~\ref{d:sigmastar}}\} \cup \{\mathbbold{1}_k\},
\end{equation}
is a non-trivial Lie subgroup of $\mathrm{O}(k)$ of dimension
\begin{equation}\label{eq:dimM}
\dim\mathscr{M}(x,Y) = \sum_{i \in I_0} \dim \mathrm{U}(m_i) + \sum_{i \notin I_0} \dim \mathrm{U}(m_i-1),
\end{equation}
where $I_0 \subseteq \{1,\dots,\ell \}$ is the set of indices $i$ such that $x_i = 0$. With this notation, we have the following.

\begin{prop}\label{p:family}
For any geodesic $\gamma$ joining the origin with $(x,Y) \in C_k$, there exists a non-empty family of mutually distinct geodesics with the same endpoints and length, obtained by the action of $\rho_M$ on $\gamma$, with $M \in \mathscr{M}(x,Y)$. The dimension of such a family is
\begin{equation}\label{eq:numberofparam}
N(x,Y) = \sum_{i\in I_0} m_i^2+\sum_{i \notin I_0} (m_i-1)^2 >0.
\end{equation}
\end{prop}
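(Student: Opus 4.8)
The plan is to realize the asserted family as the \emph{orbit} of $\gamma$ under the isometric action $\rho_M$, $M\in\mathscr M(x,Y)$, and to compute its dimension by the orbit--stabilizer principle. Since $\rho_M$ is a sub-Riemannian isometry fixing the origin, and since the defining conditions of $\mathscr M(x,Y)$ give $\rho_M(x,Y)=(Mx,MYM^*)=(x,Y)$, each $\rho_M\circ\gamma$ is again a geodesic from $0$ to $(x,Y)$ of the same length; the family $\{\rho_M\circ\gamma\mid M\in\mathscr M(x,Y)\}$ is nonempty since it contains $\gamma$. As $\mathscr M(x,Y)$ is a Lie group acting smoothly on the finite-dimensional space of geodesics (parametrized by their covectors), the orbit of $\gamma$ is an immersed submanifold of dimension $\dim\mathscr M(x,Y)-\dim\mathrm{Stab}(\gamma)$, where $\mathrm{Stab}(\gamma)=\{M\in\mathscr M(x,Y)\mid\rho_M\circ\gamma=\gamma\}$. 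Comparing \eqref{eq:dimM} with \eqref{eq:numberofparam} and using $\dim\mathrm U(m)=m^2$, one reads off immediately that $\dim\mathscr M(x,Y)=N(x,Y)$. Thus the entire statement reduces to the single claim that $\mathrm{Stab}(\gamma)$ is \emph{discrete}; the positivity $N(x,Y)>0$ then follows because membership $(x,Y)\in C_k$ forces, in the normal form, some index $i$ with $x_i=0$ or $m_i>1$, which contributes a term $\geq 1$ to \eqref{eq:numberofparam}.

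I would next identify the stabilizer explicitly. With control $u(t)=e^{-t\Omega}p$ as in \eqref{eq:control}, the curve $\rho_M\circ\gamma$ has control $Mu(t)$; since a geodesic is determined by its control and initial point through \eqref{eq:geods}, we get $\rho_M\circ\gamma=\gamma$ if and only if $Mu(t)=u(t)$ for all $t$, that is, if and only if $M$ restricts to the identity on the $\Omega$-cyclic subspace $V:=\spn\{\Omega^j p\mid j\geq 0\}$. Passing to Lie algebras, it then suffices to show that the only $A$ in the Lie algebra $\mathfrak m$ of $\mathscr M(x,Y)$ with $A|_V=0$ is $A=0$. Here, by \eqref{eq:eccola}, $\mathfrak m=\bigoplus_i\mathfrak m_i$ is block diagonal with respect to the eigenspaces of $Y$, with $\mathfrak m_i\subseteq\mathfrak u(m_i)=\{A_i\in\mathfrak{so}(2m_i)\mid[A_i,\mathbbold{J}]=0\}$ and $A|_{\ker Y}=0$.

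The key step, which I also expect to be the main obstacle, is that $V$ is built from $\Omega$ whereas the stabilizer lives in the block structure of $Y$, and in general $\Omega$ and $Y$ neither commute nor share eigenspaces. I would bypass this mismatch by exploiting that every $A\in\mathfrak m$ nevertheless commutes with $Y$: each $A_i\in\mathfrak u(m_i)$ commutes with the block $\alpha_i\mathbbold{J}_{2m_i}$, and both vanish on $\ker Y$, so $[A,Y]=0$. Consequently, if $A|_V=0$, then $A$ vanishes on the smallest $Y$-invariant subspace $W:=\spn\{Y^jv\mid v\in V,\ j\geq 0\}$ containing $V$, because $A(Y^jv)=Y^j(Av)=0$. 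Now the whole trajectory lies in $V$: since $V$ is $\Omega$-invariant and contains $p=u(0)$, we have $u(t)\in V$ and hence $x(t)=\int_0^t u\in V$ for all $t$. Integrating \eqref{eq:geods} gives $Y=\tfrac12\int_0^1 x(t)\wedge u(t)\,dt\in\wedge^2 V\subseteq\wedge^2 W$, which forces $\mathrm{Im}(Y)\subseteq W$; as $Y$ is skew-symmetric, $(\ker Y)^\perp=\mathrm{Im}(Y)\subseteq W$. Since $A|_W=0$ and $A|_{\ker Y}=0$, we conclude $A=0$ on $(\ker Y)^\perp\oplus\ker Y=\R^k$.

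This shows that $\mathrm{Stab}(\gamma)$ has trivial Lie algebra, hence is discrete, so the orbit is a family of mutually distinct geodesics of dimension $\dim\mathscr M(x,Y)=N(x,Y)$. The only points demanding care are the Lie-theoretic bookkeeping (the identity $\dim\mathrm U(m)=m^2$ and the identification $\mathfrak u(m_i)=\mathfrak{so}(2m_i)\cap\mathfrak{sp}(2m_i)$), and the propagation of $A|_V=0$ to $(\ker Y)^\perp$ through the commutation $[A,Y]=0$. Remark~\ref{r:gauthier} could be invoked to put $\Omega$ in reduced form, but the commutation argument makes this reduction unnecessary.
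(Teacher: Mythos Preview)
Your proof is correct and shares its core with the paper's: the trajectory $x(t)$ and control $u(t)$ live in a proper subspace (your $V$, the paper's fixed space $E$ of $M_2^*M_1$), so $Y=\tfrac12\int x\wedge u\in\wedge^2 V$, hence $(\ker Y)^\perp=\mathrm{Im}(Y)\subseteq V$; combined with $M|_{\ker Y}=\mathbbold{1}$ this forces the stabilizing element to be trivial. Two remarks on streamlining. First, the detour through $W$ and $[A,Y]=0$ is unnecessary: you already prove $Y\in\wedge^2 V$, which gives $\mathrm{Im}(Y)\subseteq V$ directly, so $A|_V=0$ and $A|_{\ker Y}=0$ suffice. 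Second, passing to the Lie algebra only yields a \emph{discrete} stabilizer, whereas the proposition's phrase ``mutually distinct'' asks that $M\mapsto\rho_M(\gamma)$ be injective; the paper obtains this by running the identical argument at the group level (replace $\ker A$ by the fixed space of $M$), and your own characterization $\mathrm{Stab}(\gamma)=\{M:M|_V=\mathbbold{1}\}$ already gives triviality by the same reasoning without ever descending to $\mathfrak m$.
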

\begin{proof}
For all $M \in \mathscr{M}(x,Y)$, the curves $\rho_{M}(\gamma)$ are geodesics joining the same endpoints and with the same length of $\gamma$. By contradiction, assume that $\rho_{M_1}(\gamma(t)) = \rho_{M_2}(\gamma(t))$ for all $t \in [0,T]$ and for $M_1 \neq M_2 \in \mathscr{M}(x,Y)$. In particular, $\mathbbold{1}\neq M_2^* M_1 \in \mathscr{M}(x,Y)$. Let $E \subsetneq \R^n$ be the eigenspace of $M_2^* M_1$ corresponding to the eigenvalue $1$. Since $M_1 x(t) = M_2 x(t)$, and $\dot{x}(t) = u(t)$, both $u(t),x(t) \in E$ for all $t \in [0,1]$. The geodesic equations \eqref{eq:geods} imply
\begin{equation}\label{eq:key}
Y(t) \in \wedge^2 E \subsetneq \wedge^2\R^k.
\end{equation}
Let $v \perp E$, with $v \neq 0$. By \eqref{eq:key}, $v \in \ker Y$. By Definition~\ref{d:sigmastar}, however, $M_2^*M_1 v = v$, hence $v \in E$ and $v =0$. This is a contradiction, so the $\rho_{M}(\gamma)$ are mutually distinct. The dimension of such a family is $N(x,Y) = \dim \mathscr{M}(x,Y)$ given by \eqref{eq:dimM}.
\end{proof}
\begin{rmk}
By construction, the family in Proposition~\ref{p:family} is certainly composed of distinct geodesics obtained from a given one by isometric transformations. This might not exhaust all geodesics joining the origin with a given point $(x,Y) \in C_k$. For example, in corank $1$ Carnot groups there are continuous families of non-isometric geodesics joining the origin with a given (non-generic) point, all having the same length \cite{LR-howmany}.
\end{rmk}

\begin{prop}\label{p:dimC}
For all $k \geq 2$, the set $C_k \subset \G_k$ is semi-algebraic of codimension $2$.
\end{prop}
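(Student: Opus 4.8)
The plan is to prove the two assertions separately: that $C_k$ is semi-algebraic, and that it has dimension $k(k+1)/2-2 = \dim\G_k-2$.

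\emph{Semi-algebraicity.} I would realize $C_k$ as a projection and invoke the Tarski--Seidenberg theorem. The orthogonal group $\mathrm{O}(k)=\{M : M^*M=\mathbbold 1\}$ is semi-algebraic, and the conditions $Mx=x$, $MYM^*=Y$, $Y\neq 0$ and $M\neq\mathbbold 1$ are polynomial (in)equalities. The only condition needing care is $M|_{\ker Y}=\mathbbold 1$, i.e.\ $\ker Y\subseteq\ker(M-\mathbbold 1)$; this holds if and only if $M-\mathbbold 1=CY$ for some matrix $C$ (the factorization criterion for $\ker Y\subseteq\ker(M-\mathbbold 1)$). Introducing $C$ as an auxiliary variable, $C_k$ is the image under the projection $(x,Y,M,C)\mapsto(x,Y)$ of a semi-algebraic set, hence is semi-algebraic.

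\emph{Reduction via normal forms.} By the discussion following \eqref{eq:eccola}, a point with $Y\neq 0$ lies in $C_k$ exactly when some nonzero eigenvalue block satisfies $x_i=0$ or $m_i>1$. Accordingly I would split $C_k=A\cup B$, where $B$ collects the $(x,Y)$ for which $Y$ has a repeated nonzero eigenvalue pair (some $m_i\geq 2$), with $x$ free, and $A$ collects the $(x,Y)$ with $Y\neq 0$ for which $x$ is orthogonal to the eigenspace of some nonzero eigenvalue pair (some $x_i=0$). Both $A$ and $B$ are semi-algebraic (projections over the eigenvalue parameter $\lambda$ of rank conditions on $-Y^2-\lambda\mathbbold 1$), so that $\dim C_k=\max(\dim A,\dim B)$.

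\emph{Dimension count.} I would stratify $\wedge^2\R^k$ by the multiplicity type $(m_1,\dots,m_\ell;m_0)$ of the eigenvalues of $Y$; each stratum is an $\mathrm{O}(k)$-orbit bundle over the $\ell$ eigenvalue parameters, of dimension $\ell+\tfrac{k(k-1)}2-\sum_i m_i^2-\tfrac{m_0(m_0-1)}2$. This equals $k(k-1)/2$ on the regular stratum (all $m_i=1$), and drops by exactly $3$ when two simple pairs are merged into one double pair, which is the cheapest way to enter $B$. Hence $\codim B=3$ (and $B=\varnothing$ for $k\leq 3$). For $A$, over the open dense set $U$ of matrices with maximal rank and simple nonzero spectrum the nonzero eigenspaces $V_i(Y)$ are smooth rank-$2$ bundles, and $\{(x,Y): Y\in U,\ x\perp V_i(Y)\}$ is a rank-$(k-2)$ bundle over $U$ of dimension $\tfrac{k(k-1)}2+(k-2)=\tfrac{k(k+1)}2-2$. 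For a general stratum with a chosen block $i$, the corresponding piece has dimension $[\ell+\tfrac{k(k-1)}2-\sum_j m_j^2-\tfrac{m_0(m_0-1)}2]+(k-2m_i)$, and the elementary inequality $\sum_j m_j^2-\ell+\tfrac{m_0(m_0-1)}2+2m_i\geq 2$ (using $m_j\geq 1$ and $m_i\geq 1$) shows this never exceeds $\tfrac{k(k+1)}2-2$, with equality only on the regular stratum. Thus $\codim A=2$, and $\codim C_k=\min(2,3)=2$.

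\emph{Main obstacle.} The semi-algebraicity is routine once $M|_{\ker Y}=\mathbbold 1$ is rephrased as a factorization. The crux is the dimension bookkeeping: verifying that the repeated-eigenvalue locus costs codimension $3$ (so that $B$ does not dominate), and that inside $A$ the regular stratum is the top-dimensional one. Both reduce to the orbit-dimension formula above, the only point requiring attention being the inequality $\sum_j m_j^2+\tfrac{m_0(m_0-1)}2+2m_i-\ell\geq 2$.
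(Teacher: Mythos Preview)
Your proof is correct and follows the same overall strategy as the paper: Tarski--Seidenberg for semi-algebraicity, then a stratification of $C_k$ by the multiplicity type of $Y$ for the dimension count. Two technical choices differ and are worth noting. For semi-algebraicity, the paper encodes $M|_{\ker Y}=\mathbbold{1}$ by introducing a matrix $W$ whose columns span $\ker Y$ (with the conditions $YW=0$, $MW=W$, $\rank W=k-\rank Y$); your factorization $M-\mathbbold{1}=CY$ is a cleaner equivalent, valid because $\ker Y\subseteq\ker(M-\mathbbold{1})$ is precisely the row-space inclusion needed for such a $C$ to exist. For the dimension, the paper writes down an explicit parametrization $\Phi$ of the top stratum (maximal rank, simple spectrum, a single $x_i=0$) via $\mathrm{O}(k)/\mathrm{U}(1)$ and then states without detail that all other strata have larger codimension. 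Your approach---the orbit-dimension formula $\ell+\tfrac{k(k-1)}{2}-\sum_j m_j^2-\tfrac{m_0(m_0-1)}{2}$ together with the elementary inequality $\sum_j m_j^2+\tfrac{m_0(m_0-1)}{2}+2m_i-\ell\geq 2$---handles all strata uniformly and in fact supplies the verification the paper omits. The coarser split $C_k=A\cup B$ (vanishing block versus repeated eigenvalue) is a convenient repackaging of the paper's decomposition~\eqref{eq:decomposition}, and your observation that $\codim B=3$ makes transparent why the repeated-eigenvalue mechanism never dominates.
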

\begin{proof} 
By Definition~\ref{d:sigmastar}, $(x,Y) \in C_k$ if and only if there exist $M \in \mathrm{O}(k)$ stabilizing $(x,Y)$, a matrix $W \in \R^{k \times k}$ whose columns generate $\ker Y$, and $M|_{\ker Y} = \mathbbold{1}$, that is $MW =W$. Thus, $C_k$ is the projection on the first two factors of the set $V_k \subset \R^k \times \wedge^2 \R^k \times \mathrm{O}(k) \times \R^{k \times k}$, defined by the following formula:
\begin{multline}
V_k := \left\lbrace(x,Y,M,W) \mid Mx = x,\quad MYM^* = Y, \quad \right. \\
\left. \rank (W) = k-\rank(Y), \quad Y W = 0, \quad MW = W \right\rbrace.
\end{multline}
The rank conditions being semi-algebraic, $V_k$ is a semi-algebraic set, and so is $C_k$.

To compute the dimension of $C_k$, we follow a strategy inspired by \cite[App. A]{AGL-horizontal}. We decompose $C_k$ as the disjoint union of a finite number of smooth semi-algebraic sets, each one containing points $(x,Y)$ with the same ``type'' of normal form as above:
\begin{equation}\label{eq:decomposition}
C_k = \bigsqcup_{r=1}^{\floor{k/2}}\left( \bigsqcup_{\ell=1}^{r-1}\bigsqcup_{m_1+\dots+m_\ell = r} C_k^{2r|m_1,\dots,m_\ell} \bigsqcup_{I_0  \neq \emptyset } C_k^{2r|1,\dots,1|I_0}\right).
\end{equation}
Each component in \eqref{eq:decomposition} is labeled by the rank $2r$ of $Y$, and the multiplicities $m_1,\dots,m_\ell$ of its non-zero distinct eigenvalues, ordered according to their absolute values $0<\alpha_1<\dots<\alpha_\ell$. In the special case of all simple eigenvalues $\ell=r$, that is $m_1=\dots=m_r=1$, we further decompose according to the set $I_0 \subseteq \{1,\dots,r\}$ such that $x_i =0$ for $i \in I_0$, and $x_i \neq 0$ for $i \notin I_0$. This decomposition exhausts all points of $C_k$.

We now prove that the component $C_k^{2r|1,\dots,1|I_0}$, corresponding to the case in which $\rank (Y)=2r = 2\floor{k/2}$ is maximal, with simple eigenvalues, and where $I_0 = \{ i\}$ is a singleton, is smooth of codimension $2$. A similar argument, which we omit, proves that all the other components in the decomposition are smooth, and with larger codimension.

Assume, for simplicity, that $k$ is even and that $I_0 = \{1\}$. Let $\R^n_* = \R^n \setminus \{0\}$ and $\R^{n}_{\text{ord}}$ denote the open subset of ordered, distinct $n$-tuples $0< \alpha_1<\dots< \alpha_{n}$. Consider the map $\Phi: \R^{k/2}_{\text{ord}} \times \R^{k/2-1}_* \times \mathrm{O}(k)/\mathrm{U}(1) \to C_k^{2k|1,\dots,1|I_0}$ given by
\begin{equation}
\Phi(\alpha_1,\dots,\alpha_{k/2}, b_2,\dots,b_{k/2},M)  = \rho_M\left(\begin{pmatrix}
0 \\
b_2 \\
\vdots \\
b_{k/2}
\end{pmatrix}, \begin{pmatrix}
\alpha_1 \mathbbold{J}_{2} & & & \\
& \alpha_2 \mathbbold{J}_{2} & & \\
& & \ddots & \\
& & & \alpha_{k/2} \mathbbold{J}_{2} \\
\end{pmatrix}\right),
\end{equation}
where $\mathrm{O}(k)/\mathrm{U}(1)$ is the homogeneous space (a smooth manifold) of left cosets $MU$, where $U \in \mathrm{U}(1) \subset \mathrm{O}(k)$ is the subgroup that stabilizes the given normal form of $(x,Y)$, that is
\begin{equation}
U = \begin{pmatrix}
M_1 & & & \\
& \mathbbold{1}_2 & & \\
& & \ddots & \\
& & & \mathbbold{1}_2
\end{pmatrix}, \qquad M_1 \in \mathrm{O}(2) \cup \mathrm{Sp}(2) \simeq \mathrm{U}(1).
\end{equation}
The map $\Phi$ is a bijection. This gives $C_k^{2k|1,\dots,1|I_0}$ a smooth structure, with dimension
\begin{equation}
\frac{k}{2} +\left(\frac{k}{2}-1\right) + \dim \mathrm{O}(k)/\mathrm{U}(1) =  \frac{k(k+1)}{2}-2 .
\end{equation}
The same argument yields the same formula for odd $k$. Thus, since $\dim(\G_k) = k(k+1)/2$, the codimension of the larger component of $C_k$ is $2$. Notice that, for $k=2,3$, there is only one component in \eqref{eq:decomposition}, hence in these cases $C_k$ is a \emph{smooth} semi-algebraic set.
\end{proof}

The next statement holds true, in particular, for all step $2$ Carnot groups.

\begin{lemma}\label{l:maxincut}
Let $(\distr,g)$ be a complete sub-Riemannian structure on $M$. Assume that all minimizing curves are real-analytic. If there exist two distinct minimizing geodesics joining $q$ with $q'$, then $q' \in \Cut_q$.
\end{lemma}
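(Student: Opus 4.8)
The plan is to fix the two distinct minimizing geodesics $\gamma_1,\gamma_2\colon[0,T]\to M$ from $q=\gamma_i(0)$ to $q'=\gamma_i(T)$; since both are minimizing they share the length $d(q,q')$ and hence the same constant speed, so I may take a common domain $[0,T]$. After extending $\gamma_1$ to a maximal geodesic (still denoted $\gamma_1$), the goal becomes showing that its cut time equals $T$, so that $q'=\gamma_1(t_{\cut}(\gamma_1))\in\Cut_q$ by \eqref{eq:cutloc}. As $\gamma_1|_{[0,T]}$ is minimizing we already have $t_{\cut}(\gamma_1)\ge T$, and the whole content is the reverse inequality.

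I would argue by contradiction, assuming $t_{\cut}(\gamma_1)>T$, so that $\gamma_1|_{[0,T+\varepsilon]}$ is still minimizing for some $\varepsilon>0$. The crucial construction is the concatenation
\[
\tilde\gamma(t):=\begin{cases}\gamma_2(t), & t\in[0,T],\\ \gamma_1(t), & t\in[T,T+\varepsilon],\end{cases}
\]
which is horizontal (the two pieces match at $q'$) and of constant speed. A length computation then gives $L(\tilde\gamma)=d(q,q')+L(\gamma_1|_{[T,T+\varepsilon]})=L(\gamma_1|_{[0,T+\varepsilon]})=d(q,\gamma_1(T+\varepsilon))$, so $\tilde\gamma$ is itself a minimizing curve from $q$ to $\gamma_1(T+\varepsilon)$.

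Now I would invoke the standing hypothesis: being minimizing, both $\tilde\gamma$ and $\gamma_1|_{[0,T+\varepsilon]}$ are real-analytic on $[0,T+\varepsilon]$. They coincide on the subinterval $[T,T+\varepsilon]$, which has an accumulation point, so by the identity principle for real-analytic curves (applied in charts, via the usual open-and-closed argument on the connected interval $[0,T+\varepsilon]$) they coincide on the whole interval. Restricting to $[0,T]$ yields $\gamma_2=\gamma_1$, contradicting their distinctness. Hence $t_{\cut}(\gamma_1)=T$ and $q'\in\Cut_q$.

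The main obstacle is delicate rather than computational: one must be sure that $\tilde\gamma$, which a priori is only Lipschitz and may have a corner at $t=T$, is a genuine constant-speed minimizer so that the analyticity hypothesis does apply to it (this is exactly what the length identity secures), and one must correctly invoke analytic continuation for manifold-valued curves, locally reducing to the classical identity theorem in coordinates, to upgrade agreement on $[T,T+\varepsilon]$ to agreement on the whole interval. I emphasize that this argument rests on analyticity rather than on uniqueness for a geodesic ODE, which is essential for the stated generality, since the competing minimizers could in principle be abnormal.
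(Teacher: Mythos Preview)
Your proof is correct and follows essentially the same route as the paper: concatenate $\gamma_2$ on $[0,T]$ with the extension of $\gamma_1$ on $[T,T+\varepsilon]$, observe the result is minimizing, and derive a contradiction from analyticity. The only cosmetic difference is that the paper phrases the contradiction directly---since $\gamma_1\neq\gamma_2$ are analytic and agree at $T$, some $n$-th derivative at $T$ must differ, so the concatenation fails to be analytic at $T$---whereas you invoke the identity principle on $[T,T+\varepsilon]$ to force $\gamma_1=\gamma_2$; these are contrapositive formulations of the same fact.
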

\begin{proof}
Let $\gamma_i :[0,T] \to M$, $i=1,2$, be distinct minimizing geodesics joining $q$ with $q'$. In local coordinates, $\exists n \in \mathbb{N}$ such that their $n$-th derivatives at $T$ are different. Assume that we can extend $\gamma_1$ to a minimizing geodesic on the interval $[0,T+\varepsilon]$, with $\varepsilon > 0$. Then, the curve $\gamma: [0,T+\varepsilon] \to M$ such that $\gamma|_{[0,T]} = \gamma_2$, and $\gamma|_{[T,T+\varepsilon]} = \gamma_1$ is minimizing on the interval $[0,T+\varepsilon]$, but not real-analytic. This is a contradiction, then $q' \in \mathrm{Cut}_q$.
\end{proof}
\begin{rmk}
The conclusion of Lemma~\ref{l:maxincut} holds true, although with a different proof, if the real-analytic hypothesis is replaced by the assumption that there are no abnormal minimizing curves. Otherwise, one cannot a priori exclude the case of ``branching'' minimizing curves.
More precisely, one can prove that the segment $\gamma|_{[T,T+\varepsilon]}$ defined as in the proof of Lemma~\ref{l:maxincut} must be an abnormal minimizing curve.
\end{rmk}

It is now clear that the set $C_k$ of Definition~\ref{d:sigmastar} is a set of cut points, which for $k\geq 4$ is strictly larger than the sets $P_k$ and $\Sigma_k$ of Conjectures~\ref{c:conj1} and~\ref{c:conj2}.

\begin{theorem}\label{t:main}
The following chain of inclusions holds true, for all $k \geq 4$:
\begin{equation}
P_k \subsetneq \Sigma_k \subsetneq C_k \subseteq \Cut_0(\G_k).
\end{equation}
In particular, if $\mathrm{Cut}_0(\G_k)$ is semi-algebraic, $\mathrm{codim}(\Cut_0(\G_k)) \leq 2$, for all $k$.
\end{theorem}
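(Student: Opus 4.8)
The plan is to verify the chain of inclusions link by link, since each reduces to a result already in place, and then to read off the codimension bound from Proposition~\ref{p:dimC}. The first inclusion $P_k \subsetneq \Sigma_k$ is essentially recorded already: from the proof of Proposition~\ref{p:dimsigma} we have $P_k = \Sigma_k^2$, the rank-two stratum of $\Sigma_k = \bigsqcup_{r=1}^{\lfloor k/2\rfloor}\Sigma_k^{2r}$, and for $k \geq 4$ the strata with $r \geq 2$ are nonempty (take a skew-symmetric $Y$ of rank $4$ and any $x \in \ker Y$), so the inclusion is strict. For $\Sigma_k \subseteq C_k$ I would argue that if $(x,Y) \in \Sigma_k$ then $Yx = 0$ forces $x \in \ker Y$, so any $M \in \mathrm{O}(k)$ with $M|_{\ker Y} = \mathbbold{1}$ automatically satisfies $Mx = x$; since $Y \neq 0$, passing to the normal form of Section~\ref{s:normalform} the stabilizer of $Y$ fixing $\ker Y$ pointwise is the positive-dimensional group $\prod_i \mathrm{U}(m_i)$ of \eqref{eq:eccola}, which contains nontrivial elements, and any such $M$ witnesses membership in $C_k$.

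For the strictness $\Sigma_k \subsetneq C_k$ when $k \geq 4$, I would exhibit a point of $C_k$ lying outside $\Sigma_k$, i.e.\ with $Yx \neq 0$. The natural candidate is a single eigenvalue of multiplicity $m_1 = 2$: take $Y = \alpha \mathbbold{J}_4$ (this already forces $k \geq 4$) and any $x \in \R^4 \setminus \{0\}$. Since $\mathbbold{J}_4$ is invertible we have $Yx \neq 0$, so $(x,Y) \notin \Sigma_k$; on the other hand $m_1 = 2 > 1$, and the stabilizer of a nonzero vector inside $\mathrm{U}(2)$ is a nontrivial $\mathrm{U}(1)$, so the normal-form criterion places $(x,Y)$ in $C_k$.

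The heart of the argument is the inclusion $C_k \subseteq \Cut_0(\G_k)$. Given $(x,Y) \in C_k$, completeness of $(\G_k,d)$ supplies a minimizing geodesic $\gamma$ from the origin to $(x,Y)$. By Proposition~\ref{p:family}, the curves $\rho_M(\gamma)$ with $M \in \mathscr{M}(x,Y)$ form a family of mutually distinct geodesics of positive dimension $N(x,Y) > 0$, all with the same endpoints and length as $\gamma$. Since each $\rho_M$ is an isometry fixing both $0$ and $(x,Y)$, every $\rho_M(\gamma)$ is again \emph{minimizing}; in particular there are at least two distinct minimizing geodesics joining $0$ to $(x,Y)$. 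As the geodesics of $\G_k$ are real-analytic, Lemma~\ref{l:maxincut} then yields $(x,Y) \in \Cut_0(\G_k)$.

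Finally, the codimension bound is immediate: by Proposition~\ref{p:dimC}, $C_k$ is semi-algebraic of codimension $2$, and from $C_k \subseteq \Cut_0(\G_k)$ monotonicity of dimension under inclusion gives $\dim \Cut_0(\G_k) \geq \dim C_k = \dim \G_k - 2$, so $\codim \Cut_0(\G_k) \leq 2$ whenever the cut locus is semi-algebraic (which is what makes this notion of dimension well defined). I expect no single step to present a genuine obstacle, as the substantive work is already carried by Proposition~\ref{p:family} and Lemma~\ref{l:maxincut}; the one point deserving care is to confirm that the isometric images $\rho_M(\gamma)$ remain minimizing and genuinely distinct, which is exactly what the combination of completeness with Proposition~\ref{p:family} guarantees.
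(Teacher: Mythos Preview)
Your proposal is correct and follows exactly the paper's approach: the first strict inclusion is immediate from the rank stratification, the second from the normal-form characterization in Section~\ref{s:normalform}, the inclusion $C_k \subseteq \Cut_0(\G_k)$ from Proposition~\ref{p:family} combined with Lemma~\ref{l:maxincut}, and the codimension bound from Proposition~\ref{p:dimC}. The only cosmetic point is that for $k>4$ your example should read $Y=\diag(\alpha\mathbbold{J}_4,\mathbbold{0}_{k-4})$ with $x\in\R^k$ having nonzero first four components, but the argument is unaffected.
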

\begin{proof}
The first strict inclusion is trivial. The second is a consequence of the normal forms in Section~\ref{s:normalform}. Notice that all minimizing curves of $\G_k$ are real-analytic. Hence, the third inclusion follows from Proposition~\ref{p:family}, applied to minimizing geodesics, and Lemma~\ref{l:maxincut}. The claim on the codimension follows from Proposition~\ref{p:dimC}.
\end{proof}

\subsection{Relation with the abnormal set}
Let $\mathrm{End}: L^\infty([0,1],\R^k) \to \G_k$ be the \emph{end-point map}, which associates with a given control $u(t)$ the end-point of the corresponding horizontal curve \eqref{eq:horizontal}. Trajectories corresponding to critical points of $\mathrm{End}$ are called \emph{singular curves}, and the set $\Abn_0(\G_k)$ of critical values of $\mathrm{End}$ is the so-called \emph{abnormal set}. Determining the ``size'' of $\Abn_0(\G_k)$ on a general sub-Riemannian manifold is one of the hard open problems in the field \cite{Agraproblems}. A computation using the formula for the differential of the end-point map and the chronological calculus (see \cite{nostrolibro}) yields
\begin{equation}\label{eq:abnormalset}
\Abn_0(\G_k) = \bigsqcup_{W \in \mathrm{Gr}(k,k-2)} W \oplus \wedge^2 W, \qquad \forall k \geq 2,
\end{equation}
where $\mathrm{Gr}(k,k-2)$ is the Grassmannian of codimension $2$ subspaces of $\R^k$. We refer to \cite{EnricoSard} for a proof of \eqref{eq:abnormalset} not resorting to chronological calculus, and several alternative characterizations. We point out that, if $\dim(\ker Y) \geq 3$, then $(x,Y) \in \Abn_0(\G_k)$.

In the low dimensional cases, it turns out that the boundary of the cut locus coincides with the abnormal set:
\begin{equation}\label{eq:closure}
\Abn_0(\G_k) = \overline{\Cut_0(\G_k)} \setminus \Cut_0(\G_k), \qquad k=2,3.
\end{equation}
For $k\geq 4$, formula \eqref{eq:closure} is no longer true. More precisely, we have the following result, yielding also the first example of abnormal geodesic with finite cut time.
\begin{prop} \label{p:abnormal}
For all $k \geq 2$, we have 
\begin{equation}
\overline{C}_k \setminus C_k \subseteq \Abn_0(\G_k) \subsetneq \overline{C}_k.
\end{equation}
For each $k \geq 4$, the first inclusion is strict, and thus $\mathrm{Cut}_0(\G_k) \cap \Abn_0(\G_k) \neq \emptyset$. Moreover, there exist abnormal geodesics with finite cut time.
\end{prop}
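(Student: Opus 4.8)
The plan is to prove the two inclusions separately and then the three finer claims. Throughout I use the following reading of \eqref{eq:abnormalset}: unpacking the condition $(x,Y) \in W \oplus \wedge^2 W$ for $W \in \mathrm{Gr}(k,k-2)$, a point lies in $\Abn_0(\G_k)$ if and only if there is a $2$-dimensional subspace $V \subseteq \ker Y$ with $x \perp V$, i.e.\ $\dim(\ker Y \cap x^\perp) \geq 2$.

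For the inclusion $\overline{C}_k \setminus C_k \subseteq \Abn_0(\G_k)$ I argue the contrapositive: if $(x,Y) \in \overline{C}_k \setminus \Abn_0(\G_k)$, then $(x,Y) \in C_k$. Choose $(x_n,Y_n) \in C_k$ with $(x_n,Y_n)\to(x,Y)$. The key device, which bypasses the delicate question of whether the stabilizers $\mathscr{M}(x_n,Y_n)$ degenerate to the identity in the limit, is to pass to their Lie algebras. By Proposition~\ref{p:family} each $\mathscr{M}(x_n,Y_n)$ has positive dimension, so I may pick a unit-norm generator $A_n \in \mathrm{Lie}(\mathscr{M}(x_n,Y_n)) \subseteq \mathfrak{so}(k)$; differentiating $M x_n = x_n$ and $MY_nM^* = Y_n$ gives $A_n x_n = 0$ and $[A_n,Y_n]=0$. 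By compactness a subsequence converges to some $A \in \mathfrak{so}(k)$ with $\|A\|=1$, whence $A \neq 0$, $Ax=0$ and $[A,Y]=0$. Since $A$ commutes with $Y$ it respects the orthogonal splitting of $\R^k$ into $\ker Y$ and the eigenblocks of $Y$, so $A = A|_{\ker Y} \oplus \bigoplus_j A_j$ with $A_j \in \mathfrak{u}(m_j)$, and $Ax=0$ forces $A_j x_j = 0$ for every $j$. If $A|_{\ker Y} \neq 0$, its image is a subspace of $\ker Y$ of dimension $\geq 2$ lying in $(\ker A)^\perp \subseteq x^\perp$, giving $(x,Y) \in \Abn_0(\G_k)$, a contradiction. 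Otherwise some $A_j \neq 0$: if $m_j > 1$ then $(x,Y) \in C_k$ by the normal-form criterion of Section~\ref{s:normalform}, while if $m_j = 1$ then $A_j$ is a nonzero multiple of $\mathbbold{J}_2$, so $A_j x_j = 0$ forces $x_j = 0$ and again $(x,Y) \in C_k$. This is the technical heart of the proposition.

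For the reverse inclusion $\Abn_0(\G_k) \subseteq \overline{C}_k$, given $(x,Y)$ and $V \subseteq \ker Y$ as above I set $Y_\varepsilon := Y + \varepsilon \mathbbold{J}_V$, with $\mathbbold{J}_V$ a rotation generator supported on $V$. As $V \subseteq \ker Y$, the matrix $Y_\varepsilon$ gains a new eigenplane $V$ on which the component of $x$ vanishes (because $x \perp V$); hence $(x,Y_\varepsilon) \in C_k$ for all small $\varepsilon > 0$ and $(x,Y_\varepsilon)\to(x,Y)$, proving $(x,Y) \in \overline{C}_k$. For the strict inclusion $\Abn_0(\G_k) \subsetneq \overline{C}_k$ it suffices to produce a point of $C_k \setminus \Abn_0(\G_k)$: any $(0,Y)$ with $Y$ of maximal rank (so $\dim\ker Y \leq 1$) lies in $C_k$ since $x=0$, but not in $\Abn_0(\G_k)$ since $\ker Y$ contains no $2$-plane.

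Finally, for $k \geq 4$ take $Y = \alpha\,\mathbbold{J}_2 \oplus \mathbbold{0}_{k-2}$ of rank $2$ and $x=0$: then $\dim\ker Y = k-2 \geq 2$ gives $(0,Y)\in\Abn_0(\G_k)$, while $x=0$ gives $(0,Y)\in C_k$, so $C_k \cap \Abn_0(\G_k) \neq \emptyset$. Together with the two inclusions already established, this forces $\overline{C}_k\setminus C_k=\Abn_0(\G_k)\setminus C_k\subsetneq\Abn_0(\G_k)$, so the first inclusion is strict; and since $C_k\subseteq\Cut_0(\G_k)$ by Theorem~\ref{t:main}, it gives $\Cut_0(\G_k)\cap\Abn_0(\G_k)\neq\emptyset$. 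To produce an abnormal geodesic with finite cut time I take the control $u(t) = e^{-t\Omega_0}p$ with $\Omega_0 = c\,\mathbbold{J}_2$ and $p$ both supported on $W_0 = \spn\{e_1,e_2\}$; its trajectory $\gamma_0$ stays in $W_0$, which for $k \geq 4$ is contained in a codimension-$2$ subspace, so $\gamma_0$ is abnormal by \eqref{eq:abnormalset}. The control is periodic with zero mean, so $\gamma_0(2\pi n/c) = (0,nY_1)$ for a fixed $Y_1 \neq 0$; comparing the length $\sim n$ of $\gamma_0|_{[0,2\pi n/c]}$ with $d(0,(0,nY_1)) = \sqrt{n}\,d(0,(0,Y_1))$, obtained from the dilations $\delta_{\sqrt n}$, shows $\gamma_0$ stops minimizing for large $t$, so $t_{\cut}(\gamma_0) < \infty$. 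The main obstacle is the first inclusion, and the Lie-algebra rescaling is precisely what makes the limiting stabilizer argument robust against degeneration.
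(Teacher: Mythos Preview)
Your proof is correct, and in two places takes a genuinely different route from the paper.

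For the inclusion $\overline{C}_k \setminus C_k \subseteq \Abn_0(\G_k)$, the paper argues directly: assuming $(x,Y)\notin C_k$ (so all nonzero eigenvalues of $Y$ are simple and all $x_i\neq 0$), it tracks the spectral projectors $\pi_i^n$ of an approximating sequence $(x^n,Y^n)\in C_k$ using Kato-type continuity, and shows by case analysis on $x_0$ that either $m_0\geq 2$ with $x_0=0$, or $m_0\geq 3$; in both cases $(x,Y)\in\Abn_0(\G_k)$. Your Lie-algebra argument replaces this spectral bookkeeping with a single compactness step: extracting a nonzero limit $A\in\mathfrak{so}(k)$ of rescaled infinitesimal stabilizers, then reading off either membership in $C_k$ or abnormality from the block decomposition of $A$. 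Your approach is more conceptual and sidesteps the somewhat delicate accounting of which eigenvalues of $Y^n$ coalesce to zero; the paper's approach, on the other hand, gives slightly more explicit information (the bound $m_0\geq 3$ when $x_0\neq 0$).

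For the existence of an abnormal geodesic with finite cut time, the paper exhibits the same curve $\gamma_0$ and, using the explicit distance formula of Appendix~\ref{a:vertical}, verifies that $\gamma_0$ is minimizing on $[0,1]$ and hence $t_{\cut}(\gamma_0)=1$ exactly. Your dilation argument $d(0,(0,nY_1))=\sqrt{n}\,d(0,(0,Y_1))$ versus $L(\gamma_0|_{[0,2\pi n/c]})\sim n$ is more self-contained (it does not invoke the Appendix) but yields only $t_{\cut}(\gamma_0)<\infty$ rather than its precise value. One minor point: when you say ``$\gamma_0$ is abnormal by \eqref{eq:abnormalset}'', that equation describes the abnormal \emph{set} (critical values), not abnormal \emph{curves} (critical points); the correct justification is the standard fact that in step~$2$ a horizontal curve is singular iff its control lies a.e.\ in a proper subspace of $\R^k$, which is immediate here since $u(t)\in W_0$.

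The remaining pieces --- the approximation $Y_\varepsilon=Y+\varepsilon\mathbbold{J}_V$ for $\Abn_0(\G_k)\subseteq\overline{C}_k$, the maximal-rank example for strictness, and the point $(0,\alpha\mathbbold{J}_2\oplus\mathbbold{0}_{k-2})$ for the intersection --- match the paper's proof essentially verbatim.
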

\begin{proof}
Let $(x,Y) \in \overline{C}_k \setminus C_k$. By the discussion in Section~\ref{s:normalform}, we assume that
\begin{equation}\label{eq:sequence}
x = \begin{pmatrix}
x_1 \\
\vdots \\
x_\ell \\
x_0
\end{pmatrix}, \qquad Y = \begin{pmatrix}
\alpha_1 \mathbbold{J}_2 & & & \\
 & \ddots & & \\
 & & \alpha_\ell \mathbbold{J}_2 & \\
 & & & \mathbbold{0}_{m_0}
\end{pmatrix}, \qquad 0 \neq x_i \in \R^2, \quad i =1,\dots,\ell.
\end{equation}
Let $(x^n,Y^n) \in C_k$ be a sequence converging to $(x,Y)$ as $n \to +\infty$. Since the non-zero eigenvalues of $Y$ are simple, the same holds  for the non-zero eigenvalues of $Y^n$. Consequently, the associated real projectors $\pi_i^n$, for $i=1,\dots,\ell$, are continuous \cite[Chap. 2, Thm. 5.1]{Kato}, and so are the projections $\pi_\perp^n := \mathbbold{1} -\sum_{i=1}^\ell \pi_i^n$. Observe that $\lim_n \pi_\perp^n = \pi_0$ is the projection on $\ker Y$, but $\pi_\perp^n$ is not necessarily the projection on $\ker Y^n$ (a pair of eigenvalues $\pm i \alpha^n \neq 0$ of $Y^n$ could coalesce to zero for $n \to +\infty$). In particular, we stress that $m_0=\dim(\ker Y) \geq 2$, otherwise no such a sequence could exist. 

If $x_0 = 0$, then $(x,Y) \in \Abn_0(\G_k)$ by the characterization \eqref{eq:abnormalset} of the abnormal set. Thus, let $x_0 \neq 0$. By our assumption on the sequence $(x^n,Y^n)$, we have $\pi_i^n(x^n)  \neq 0$, for all $i \in \{1,\dots,\ell,\perp\}$, and large $n$. Since $(x^n,Y^n) \in C_k$, and by the characterization of Section~\ref{s:normalform}, we have $m_0 = \dim(\ker Y) \geq 3$. In this case, $(x,Y) \in \Abn_0(\G_k)$, by \eqref{eq:abnormalset}. This proves that $\overline{C}_k \setminus C_k \subseteq \Abn_0(\G_k)$.

We now prove that $\Abn_0(\G_k) \subsetneq \overline{C}_k$. By the characterization of \eqref{eq:abnormalset}, let $(x,Y)  \in W \oplus \wedge^2 W$, where $W \subset \R^k$ is a codimension $2$ subspace. Up to an orthogonal transformation, we can assume that $W = \{(x_1,\dots,x_{k-2},0,0)^* \in \R^k\} \simeq \R^{k-2}$, that is
\begin{equation}
x = \begin{pmatrix}
x_1 \\
0
\end{pmatrix}, \qquad Y = \begin{pmatrix}
Y_1 & \\
& \mathbbold{0}_2
\end{pmatrix}, \qquad x_1 \in W, \quad Y_1 \in \wedge^2W.
\end{equation}
Let $(x^n,Y^n)$ be the sequence with $x^n = x$, and $Y^n$ obtained from $Y$ by replacing the lower right block $\mathbbold{0}_2$ with $\tfrac{1}{n} \mathbbold{J}_2$.
Then $(x^n,Y^n) \in C_k$ for all $n$, and the sequence converges to the given abnormal point $(x,Y)$. This proves $\Abn_0(\G_k) \subseteq \overline{C}_k$. The inclusion is strict, for all $k \geq 2$, since $(x,Y) \in C_k$ does not belong to $\Abn_0(\G_k)$ if $Y$ has maximal rank.

To conclude the proof, for each $k \geq 4$, we give an example of cut point $(x,Y) \in C_k \cap \Abn_0(\G_k)$, reached by an abnormal minimizing geodesic. Let
\begin{equation}
x = 0, \qquad  Y= \begin{pmatrix}
\alpha \mathbbold{J}_2 &  \\
& \mathbbold{0}_{k-2}
\end{pmatrix}, \qquad \alpha > 0.
\end{equation}
It is not hard to prove that the geodesics $\gamma:[0,1] \to \G_k$ with control $u(t) = e^{-t\Omega}p$, with
\begin{equation}
p = \begin{pmatrix}
p_0 \\ 0
\end{pmatrix}, \qquad \Omega= \begin{pmatrix}
2 \pi\mathbbold{J}_2 &  \\
& \mathbbold{0}_{k-2}
\end{pmatrix}, \qquad p_0 \in \R^2,\quad |p_0|^2 = 4\pi \alpha,
\end{equation}
joins the origin with the given point and is abnormal. The formula for the distance in Appendix~\ref{a:vertical} yields $d((0,0),(x,Y))^2 = 4\pi \alpha = L(\gamma)^2$, yielding the minimality property of $\gamma$. By Theorem~\ref{t:main}, all elements of $C_k$ are cut points, therefore $t_{\cut}(\gamma) = 1$.
\end{proof}

\section{Small time heat kernel asymptotics}

Any sub-Riemannian manifold equipped with a smooth measure supports an intrinsic hypoelliptic operator, the \emph{sub-Laplacian}, playing the role of the Laplace-Beltrami operator of Riemannian geometry. In the case of Carnot groups equipped with a left-invariant measure -- and in particular for $\G_k$ -- the sub-Laplacian is the ``sum of squares'' given by
\begin{equation}
\Delta := \sum_{i=1}^k X_i^2.
\end{equation}
Let $K_t(q',q)$ denote the \emph{heat kernel} of $\Delta$, that is the fundamental solution of the heat equation $\partial_t \psi = \Delta \psi$. Its existence and smoothness for $t>0$ on complete sub-Riemannian manifolds is classical, see \cite{Strichartz}. A well known result due to L\'eandre \cites{Leandre2,Leandre1} yields the relation between the heat kernel and the sub-Riemannian distance:
\begin{equation}
\lim_{t \to 0^+} 4t \log K_t(q,q') = -d(q,q')^2.
\end{equation}
Furthermore, the singularity of the heat kernel for $t \to 0$ depends on the ``amount'' of minimizing geodesics joining $q$ with $q'$ (see \cite{BBN12} and references therein).

For what concerns Carnot groups of step $2$, an integral representation for $K_t(q):=K_t(q,0)$ has been obtained in \cite[Thm. 4]{BGG-heatkernelstep2} (the general case can be easily recovered by left-invariance). For $\G_k$, and in our notation, such a formula reads
\begin{equation}
K_t(x,Y) = \frac{2}{(4\pi t)^{k^2/2}} \int_{\wedge^2 \R^k} \sqrt{\det\left(\frac{B}{\sin B}\right)} \cos\left( \frac{\tr (B Y^*)}{2t}\right) e^{-\frac{x^*B \cot(B) x}{4t}}\, dB,
\end{equation}
where $d B$ is the Lebesgue measure of $\wedge^2 \R^k \simeq \R^{k(k-1)/2}$, and analytic functions of a matrix are defined by their power series. As an application of our results, combined with the ones in \cite{BBN12}, we mention the following small time asymptotic result.
\begin{theorem}\label{t:heatkernel}
Let $(x,Y) \in C_k \setminus \Abn_0(\G_k)$. Then there exists a constant $C>0$ such that,
\begin{equation}
K_t(x,Y) \geq \frac{C+O(t)}{t^{(n+N(x,Y))/2}}e^{-d^2_0(x,Y)/4t}, \qquad \text{as } t \to 0,
\end{equation}
where $d_0$ is the sub-Riemannian distance from the origin, $n = \dim( \G_k) = k(k+1)/2$, and $N(x,Y)$ is given by Proposition~\ref{p:family} in terms of the normal form of the point $(x,Y)$.
\end{theorem}
\begin{proof}
Let $(p,\Omega)$ be the initial covector corresponding to a minimizing curve with control $u(t) = e^{-\Omega t}p$, joining the origin with $(x,Y) \in C_k$. By Proposition~\ref{p:family}, there exists a continuous family of distinct minimizing geodesics with the same endpoints. The set of initial covectors of this family is 
\begin{equation}
\mathcal{O} = \{(Mp,M\Omega M^*) \mid M \in \mathscr{M}(x,Y)\} \subset T_{(x,Y)}^* \G_k \simeq \G_k,
\end{equation}
where $\mathscr{M}(x,Y)$ is the Lie subgroup of isometries discussed in Section~\ref{s:normalform}. In particular, $\mathcal{O}$ is an $N(x,Y)$-dimensional manifold, and $C_k$ are critical values of the sub-Riemannian exponential map $\exp_0 : T_0^* \G_k \to \G_k$, which sends $(p,\Omega)$ to the point at time $1$ of the curve with control $u(t) = e^{-t\Omega}p$. Moreover, $\dim(\ker d_\lambda \exp_0) \geq N(x,Y)$, for all points $\lambda \in \mathcal{O}$.

Since $(x,Y) \notin \Abn_0(\G_k)$, then all geodesic $\gamma:[0,1] \to M$ of this family are normal. The sub-Riemannian structure on $\G_k$ being real-analytic, then these geodesics are also \emph{strongly normal} (that is, any restriction $\gamma|_{[0,t]}$, for $t \in (0,1]$, is not abnormal, see \cite[Prop. 3.12]{curvature}). A straightforward modification of \cite[Thm. 16]{BBN-biheis} yields the result.
\end{proof}

\section{Closing remarks and open problems}

\subsection{Historical remarks}
The seminal papers by Gaveau \cite{Gaveau77} and Brockett \cite{Brocket80} mentioned in the introduction contain general statements on the minimization problem on $\G_k$ some of which are left unproven. These results might be useful to determine the true shape of $\Cut_0(\G_k)$, and we list them here, commenting their implications.
\begin{itemize}
\item The third (unproven) statement of \cite[Thm. 2]{Brocket80} asserts that any two optimal controls $u_i(t)$, $i=1,2$, such that the associated geodesics $\gamma_i$ join $(0,0)$ with $(x,Y)$ are related by $u_1(t) = \theta u_2(t)$ for some orthogonal matrix such that $\theta Y \theta^* = Y$. We were not able neither to prove nor to disprove this claim. However, if true, it would imply that $\overline{C}_k= \overline{\Cut_0(\G_k)}$ (see Proposition~\ref{p:brockett});
\item By Proposition~\ref{p:family}, points of $C_k$ are critical values of the sub-Riemannian exponential map (see \cites{nostrolibro,riffordbook}). Consequently, any point $(x,Y) \in C_k$ with $x \perp \ker Y$ is a counter-example to the ``only if'' part of the fourth claim of \cite[Thm. 2]{Brocket80};
\item For completeness, we recall that, according to \cite[Appendix A]{LiuSussman}, the result in \cite[Thm. 1, p. 133]{Gaveau77} is false.
\end{itemize}
\begin{prop}\label{p:brockett} 
If the third statement of \cite[Thm. 2]{Brocket80} is true, then $\overline{C}_k = \overline{\Cut_0(\G_k)}$. 
\end{prop}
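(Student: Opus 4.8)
The plan is to prove the two inclusions $\overline{C}_k\subseteq\overline{\Cut_0(\G_k)}$ and $\overline{\Cut_0(\G_k)}\subseteq\overline{C}_k$ separately. The first is immediate and does not use Brockett's statement: by Theorem~\ref{t:main} we have $C_k\subseteq\Cut_0(\G_k)$, and passing to closures gives $\overline{C}_k\subseteq\overline{\Cut_0(\G_k)}$. For the reverse, since $\overline{C}_k$ is closed it suffices to show $\Cut_0(\G_k)\subseteq\overline{C}_k$, and in view of the inclusion $\Abn_0(\G_k)\subseteq\overline{C}_k$ from Proposition~\ref{p:abnormal}, it is enough to prove that every cut point which is \emph{not} abnormal belongs to $C_k$.

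So fix a cut point $(x,Y)\notin\Abn_0(\G_k)$. Since it is not abnormal, every minimizing geodesic reaching it is strongly normal, and the standard dichotomy for the cut time of strongly normal geodesics (see \cite{nostrolibro}) applies: either $(x,Y)$ is joined to the origin by at least two distinct minimizers, or it is a first conjugate point reached by a unique minimizer. I would first dispatch the two–minimizer case, where Brockett's statement does its work. If $\gamma_1,\gamma_2$ are distinct minimizers with controls $u_1,u_2$, the hypothesis gives $u_1=\theta u_2$ for some $\theta\in\mathrm{O}(k)$ with $\theta Y\theta^*=Y$; integrating $\dot x=u$ and using that both curves end at $x$ yields $\theta x=x$ as well, while $\theta\neq\mathbbold{1}$ because the geodesics are distinct. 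Writing $\theta$ in the block form \eqref{eq:eccola} adapted to the normal form of $Y$, with unitary blocks $M_i$ on the eigenspaces and an orthogonal block $\theta_0$ on $\ker Y$, the point $(x,Y)$ lies in $C_k$ as soon as some $M_i\neq\mathbbold{1}$: indeed $M:=\diag(M_1,\dots,M_\ell,\mathbbold{1}_{m_0})$ then satisfies $Mx=x$, $MYM^*=Y$, $M|_{\ker Y}=\mathbbold{1}$ and $M\neq\mathbbold{1}$, so $M\in\mathscr{M}(x,Y)$.

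It thus remains to exclude the possibility that $\theta$ acts nontrivially only on $\ker Y$, i.e.\ all $M_i=\mathbbold{1}$. Here the key point is a totally–geodesic/convexity argument: setting $E:=\ker(\theta-\mathbbold{1})$, one has $E^\perp\subseteq\ker Y$, hence $x\in E$ and $Y\in\wedge^2E$, so both endpoints lie in the free step-$2$ Carnot subgroup $\G_E=E\oplus\wedge^2E$. Projecting the control of $\gamma_2$ orthogonally onto $E$ yields a horizontal curve in $\G_E$ with the \emph{same} endpoints and strictly smaller length unless the control already takes values in $E$; minimality of $\gamma_2$ then forces $x_2(t)\in E$ for all $t$, whence $\rho_\theta(\gamma_2)=\gamma_2$ and so $\gamma_1=\gamma_2$, a contradiction. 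This shows that in the two–minimizer case $(x,Y)\in C_k$.

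The main obstacle is the remaining case of a non-abnormal cut point reached by a \emph{unique} minimizer, necessarily a first conjugate point. By Proposition~\ref{p:family} every point of $C_k$ is reached by a positive–dimensional family of distinct minimizers, so a unique–minimizer point cannot lie in $C_k$; since moreover $\overline{C}_k=C_k\cup\Abn_0(\G_k)$ is already closed (Proposition~\ref{p:abnormal}), no approximation can help and such a point would be a genuine counterexample. Hence the crux is to prove that in $\G_k$ every non-abnormal cut point is reached by at least two minimizers — equivalently, that off the abnormal set the cut locus coincides with the Maxwell set and the first conjugate time strictly exceeds the cut time along the relevant geodesics. I expect this to be the genuinely hard step: it seems to require the explicit geodesic and exponential flow of $\G_k$ (e.g.\ \cite{MPAM-geodesics}), and I would attack it by using a kernel vector of the differential of the exponential map at a putative conjugate cut point, together with the symmetry group $\mathscr{M}$, to manufacture a second distinct minimizer and reach a contradiction.
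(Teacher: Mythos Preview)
Your handling of the two--minimizer case is correct and in fact more detailed than the paper's: your projection argument onto $E=\ker(\theta-\mathbbold{1})$ cleanly rules out the possibility that $\theta$ acts nontrivially only on $\ker Y$, so under Brockett's hypothesis you have established $\mathrm{II}_0(\G_k)\subseteq C_k$ outright (the paper is content with the weaker $\mathrm{II}_0(\G_k)\subseteq C_k\cup\Abn_0(\G_k)$).

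The gap is exactly where you locate it, but it is not a ``genuinely hard step'' requiring the explicit exponential map: it dissolves once you invoke the general sub-Riemannian fact used by the paper, namely \cite[Thm.~8.54]{nostrolibro},
\[
\Cut_0(\G_k)\subseteq \overline{\mathrm{II}}_0(\G_k)\cup\bigl(\Cut_0(\G_k)\cap\Abn_0(\G_k)\bigr).
\]
With this in hand, your own inclusion $\mathrm{II}_0(\G_k)\subseteq C_k$ gives $\overline{\mathrm{II}}_0(\G_k)\subseteq\overline{C}_k$, and $\Cut_0(\G_k)\cap\Abn_0(\G_k)\subseteq\Abn_0(\G_k)\subseteq\overline{C}_k$ by Proposition~\ref{p:abnormal}; hence $\Cut_0(\G_k)\subseteq\overline{C}_k$ and you are done. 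Notice also that this reasoning, combined with your observation that $\overline{C}_k=C_k\cup\Abn_0(\G_k)$ and that points of $C_k$ are reached by several minimizers, actually \emph{shows} that under Brockett's hypothesis no non-abnormal cut point with a unique minimizer exists --- so the case you feared is vacuous rather than hard. Your proposed attack via kernel vectors of $d\exp_0$ is therefore unnecessary; the missing ingredient was the density-type inclusion above, not any new analysis of conjugate points.
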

\begin{proof}
For a general sub-Riemannian manifold, let $\mathrm{II}_q$ be the set of points reached by at least two distinct minimizing geodesics, issued from $q$. By \cite[Thm. 8.54]{nostrolibro}, we have,
\begin{equation}\label{eq:generalfactcut}
\Cut_q \subseteq \overline{\mathrm{II}}_q \cup (\Cut_q \cap \Abn_q).
\end{equation}
Assume that the third statement of \cite[Thm. 2]{Brocket80} is true. In particular, if $(x,Y) \in \mathrm{II}_0(\G_k)$, there exists a non-trivial $\theta \in \mathrm{O}(k)$ that stabilizes $(x,Y)$. Then, the discussion about normal forms of points of $C_k$ in Section~\ref{s:normalform} and the characterization \eqref{eq:abnormalset} imply that $\mathrm{II}_0(\G_k) \subseteq C_k \cup \Abn_0(C_k)$. Taking the closure, and using Proposition~\ref{p:abnormal}, $\overline{\mathrm{II}}_0(\G_k) \subseteq \overline{C}_k$. This, together with \eqref{eq:generalfactcut}, imply $\Cut_0(\G_k) \subseteq \overline{C}_k \subseteq \overline{\Cut_0(\G_k)}$, concluding the proof.
\end{proof}

\subsection{The definition of cut locus}

The sub-Riemannian cut locus is sometimes defined in the literature as the set of points joined by two distinct minimizing geodesics, see e.g.\ \cites{AM-Stiefel,AM-Htypecut,AAA-Tangent}. While this definition might be useful for some statements, it does not correspond to the classical one in the Riemannian case.

\subsection{Codimension}
Generically, one would expect that the cut locus of a complete sub-Riemannian structure has Hausdorff codimension $1$ (with respect to some, and then any, auxiliary Riemannian metric). This is indeed the case if there are no abnormal minimizing curves \cite{RiffTrel}. However, Carnot groups are certainly not ``generic'', and in fact $\codim(\Cut_0(\G_k)) = 2$ for $k=2,3$. By Theorem~\ref{t:main}, $\codim(\Cut_0(\G_k)) \leq 2$ for $k\geq 4$. It is reasonable to expect that $\codim(\Cut_0(\G_k)) =2$ for all $k$, but this problem remains open.

\subsection{Symmetries and geodesic equations}

Note that one can prove that $\Cut_0(\G_2) = C_2$ with a purely symmetry argument. Indeed, the invariance of the cut locus by dilations and isometries implies that if $(x,z) \in \Cut_0(\G_2)$ and $x \neq 0$, then $(\varepsilon M x, \varepsilon^2 z) \in \mathrm{\Cut}_0(\G_2)$, for all $\varepsilon>0$ and $M \in \mathrm{O}(2)$. This parabola separates $\G_2$ into two disconnected regions. Since no minimizing path can cross the parabola at an intermediate time, this yields a contradiction, and implies that $\mathrm{Cut}_0(\G_2) \subseteq \{(x,0)\mid x \in \R^2_*\} = C_2$. By Theorem~\ref{t:main} -- the proof of which, we stress, never uses the integrated geodesic equations -- we have $C_2 \subseteq \Cut_0(\G_2)$, which proves the claim.

Unfortunately, this argument does not carry on for rank $k \geq 3$. In order to prove whether $C_k = \Cut_0(\G_k)$, for all $k$, it seems that a fine analysis of the integrated geodesic equations is required (as for the cases $k=2,3$). Any attempt in this direction had proved inconclusive, due to the very complicated structure of the geodesics for $k \geq 4$.

\appendix

\section{Distance from the points on the vertical subspace}\label{a:vertical}

We compute the distance from the origin of vertical points in $\G_k$. A very close formula appears as the second statement of \cite[Thm. 2]{Brocket80}, and differs from ours by a factor $4\pi$. 
\begin{prop}
Let $(0,Y) \in \G_k$, with $\rank (Y) = 2r$, and let $0<\alpha_1\leq \alpha_2 \leq \cdots \leq \alpha_r$ be the (possibly repeated) absolute values of the non-zero eigenvalues of $Y$. Then,
\begin{equation}
d((0,0),(0,Y))^2 = 4 \pi \sum_{j=1}^r (r-j+1) \alpha_j.
\end{equation}
\end{prop}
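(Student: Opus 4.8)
The plan is to identify the distance with the minimum of the \emph{energy} functional and then to reduce to a finite, combinatorial optimization via Fourier analysis. Recall the standard fact (Cauchy--Schwarz together with the reparametrization invariance of the length) that, for curves defined on $[0,1]$,
\[
d((0,0),(0,Y))^2 = \min_{\gamma} E(\gamma), \qquad E(\gamma) := \int_0^1 |u(t)|^2\,dt,
\]
the minimum being over horizontal curves $\gamma=(x,Y(\cdot))$ joining $(0,0)$ with $(0,Y)$, with control $u=\dot x$ as in \eqref{eq:horizontal}. Since the target point has vanishing horizontal part, the projection $x:[0,1]\to\R^k$ is a loop with $x(0)=x(1)=0$; integrating the vertical equation in \eqref{eq:geods} gives $Y=\tfrac12\int_0^1 x\wedge\dot x\,dt$, so the constraint $Y(1)=Y$ is the isoperimetric-type condition that the matrix of signed projected areas of $x$ equals $Y$. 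As the components of $x$ in $\ker Y$ only increase $E$ without contributing to $Y$, I would assume from the start that $x$ takes values in $(\ker Y)^\perp\cong\R^{2r}$.

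Next I would extend $x$ periodically and expand $x(t)=\sum_{n\in\mathbb{Z}} c_n e^{2\pi i n t}$, writing $c_n=a_n+i b_n$ with $a_n,b_n\in\R^k$ and $c_{-n}=\bar c_n$. A direct computation with Parseval's identity expresses both quantities through the Fourier modes,
\[
E = 8\pi^2 \sum_{n\geq 1} n^2\big(|a_n|^2 + |b_n|^2\big), \qquad Y = -4\pi \sum_{n\geq 1} n\,(a_n\wedge b_n).
\]
Setting $\mu_n := -4\pi\, n\,(a_n\wedge b_n)$, each $\mu_n$ is a \emph{simple} (rank $\le 2$) skew-symmetric matrix with $\sum_{n\ge1}\mu_n=Y$, and the elementary bound $|a_n\wedge b_n|\le\tfrac12(|a_n|^2+|b_n|^2)$, sharp for orthogonal vectors of equal norm, yields $E\geq 4\pi\sum_{n\geq1} n\,\|\mu_n\|$, where $\|\cdot\|$ denotes the area (the unique nonzero singular value) of a simple $2$-vector. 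The problem is thereby reduced to the combinatorial optimization
\[
\min \Big\{ \sum_{n\geq1} n\,\|\mu_n\| \ \Big|\ \sum_{n\geq1}\mu_n = Y,\ \mu_n\ \text{simple}\Big\},
\]
in which at most one simple piece $\mu_n$ is available for each positive integer frequency $n$.

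For the lower bound I would argue that, since $\rank(Y)=2r$ and each $\mu_n$ has rank at most $2$, subadditivity of the rank forces at least $r$ nonzero pieces, hence at least $r$ distinct positive integer weights, whose smallest possible values are $\{1,\dots,r\}$. A rearrangement/majorization argument of Ky Fan--Lidskii type then bounds $\sum_n n\,\|\mu_n\|$ below by the value obtained by aligning each piece with an eigenplane of $Y$ and pairing the smallest weight with the largest area; with $\alpha_1\le\dots\le\alpha_r$ this is $\sum_{j=1}^r(r-j+1)\alpha_j$. This is the step I expect to be the main obstacle, since one must control simultaneously the \emph{ranks}, the \emph{total weight} (to exclude any gain from using more than $r$ frequencies, or integers larger than necessary), and the \emph{alignment} of the simple pieces with the eigenspaces of $Y$. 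The cleanest route is probably to put $Y$ in the real normal form of Section~\ref{s:normalform} and to estimate through the partial sums of smallest areas, using the identity $\sum_{j=1}^r(r-j+1)\alpha_j=\sum_{m=1}^r(\alpha_1+\dots+\alpha_m)$.

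For the matching upper bound I would exhibit the explicit extremal. Taking $\Omega$ in the normal form of Remark~\ref{r:gauthier} with blocks $2\pi(r-j+1)\mathbbold{J}_2$ on the $j$-th eigenplane, and an initial covector $p$ whose block norms are tuned so that the $j$-th circular arc encloses the prescribed area $\alpha_j$, the control $u(t)=e^{-t\Omega}p$ of \eqref{eq:control} has constant speed, its projection closes up at $t=1$ because all frequencies are integer multiples of $2\pi$, and all off-diagonal (cross-plane) areas vanish by the orthogonality of distinct harmonics over $[0,1]$. This curve realizes $E=4\pi\sum_{j=1}^r(r-j+1)\alpha_j$, matching the lower bound and proving the formula; in particular it recovers the value $4\pi\alpha$ used in the proof of Proposition~\ref{p:abnormal}.
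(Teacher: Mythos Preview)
Your approach is sound and genuinely different from the paper's. The paper never touches Fourier series or arbitrary horizontal loops: it uses from the outset that minimizers have controls $u(t)=e^{-t\Omega}p$ (Pontryagin maximum principle, plus Goh conditions to rule out strict abnormals), so the closing condition $x(1)=0$ forces the nonzero eigenvalues of $\Omega$ to be $\pm 2\pi i\phi$ with $\phi\in\N$. After invoking Remark~\ref{r:gauthier} to make these eigenvalues \emph{simple}, a direct integration of \eqref{eq:geods} gives $Y(1)$ block-diagonal with blocks $\tfrac{|p_j|^2}{4\pi\phi_j}\mathbbold{J}_2$, whence $L(\gamma)^2=|p|^2=4\pi\sum_j\phi_j\alpha_j$, and one minimizes over assignments of \emph{distinct} positive integers $\phi_j$ to the $\alpha_j$'s by the rearrangement inequality. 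The simple-eigenvalue reduction is what makes the paper's lower bound a one-liner: the ``pieces'' are automatically supported on mutually orthogonal $2$-planes, so no Ky~Fan type estimate is needed.

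Your route trades the PMP input for a harder combinatorial step, but that step can indeed be closed. With $\sum_{n\ge1}\mu_n=Y$ and each $\mu_n$ simple, Abel summation gives
\[
\sum_{n\ge1} n\,\|\mu_n\| \;=\; \sum_{m\ge1}\ \sum_{n\ge m}\|\mu_n\|,
\]
and for each $m\le r$ the tail $\sum_{n\ge m}\mu_n=Y-\sum_{n<m}\mu_n$ differs from $Y$ by a matrix of rank at most $2(m-1)$; Eckart--Young--Mirsky (equivalently, the Ky~Fan trace-norm inequality) then yields $\sum_{n\ge m}\|\mu_n\|\ge \alpha_1+\dots+\alpha_{r-m+1}$, and summing over $m=1,\dots,r$ gives exactly $\sum_{j=1}^r(r-j+1)\alpha_j$ via the identity you wrote. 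So the ``main obstacle'' you flag is real but surmountable. What your approach buys is a PMP-free, variational proof that makes the isoperimetric nature of the vertical distance transparent; what the paper's approach buys is brevity, since the geodesic normal form collapses the analysis to the rearrangement inequality over $r$ distinct integers.
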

\begin{proof}
Without loss of generality, 
Let $\gamma(t) =(x(t),Y(t))$ a geodesic from the origin such that $x(1) = 0$ and $Y(1) = Y$, with control $u(t) = e^{-\Omega t}p$. By \eqref{eq:geods}, we have
\begin{equation}
\int_0^1 e^{-t\Omega} p \,dt =0.
\end{equation}
Thus, the non-zero eigenvalues of $\Omega$ are of the form $\pm i 2\pi \phi$, with $\phi \in \N$. By Remark~\ref{r:gauthier}, and up to an orthogonal transformation, we assume that $\Omega = \diag(2\pi \phi_1\mathbbold{J}_2, \dots, 2\pi \phi_\ell \mathbbold{J}_2, \mathbbold{0}_{k - 2\ell})$, with all simple eigenvalues, $2\ell = \rank(\Omega)$, and with distinct $\phi_i \in \N$. We split accordingly $p=(p_1,\dots,p_\ell,p_0)$, with $p_i \in \R^2$ for $i=1,\dots,\ell$ and $p_0 \in \R^{k-2\ell}$. Under these assumptions, it is not hard to integrate the vertical part of the geodesic equations \eqref{eq:geods}. We obtain
\begin{equation}
Y(1) = \diag\left(\frac{|p_1|^2}{4\pi \phi_1}, \dots, \frac{|p_\ell|^2}{4\pi \phi_\ell}, \mathbbold{0}_{k-2\ell} \right).
\end{equation}
Then, $2\ell = 2r$, and $|p_j|^2 = 4 \pi \phi_j  \alpha_j$ for all $j=1,\dots,r$. The squared length of $\gamma$ is
\begin{equation}
L(\gamma)^2 = \left(\int_0^1 |u(t)| dt\right)^2 = |p|^2 = \sum_{j=1}^r |p_j|^2 = 4\pi \sum_{j=1}^r \phi_j \alpha_j.
\end{equation}
The minimum of the length is obtained when $\phi_j = r-j+1$, for all $j=1,\dots,r$.
\end{proof}

\subsection*{Acknowledgments}
This research has  been supported by the European Research Council, ERC StG 2009 ``GeCoMethods'', contract n. 239748. The first author was partially supported by the Grant ANR-15-CE40-0018 of the ANR, by the iCODE institute (research project of the Idex Paris-Saclay), and by the SMAI project ``BOUM''. This research benefited from the support of the ``FMJH Program Gaspard Monge in optimization and operation research'' and from the support to this program from EDF. The second author was partially supported by the Grant ANR-12-BS03-0005 LIMICOS of the ANR.

\medskip

The first author wishes to thank A. Gentile for helpful discussions on semi-algebraic sets, U. Boscain for hinting at the implications of Proposition~\ref{p:family} to heat kernel asymptotics, and Y. Sachkov for carefully reading a preliminary version of the paper.

\bibliographystyle{alpha}
\bibliography{cut-free}

\end{document}